\documentclass[12pt,a4paper]{article}
\usepackage{amsfonts}
\usepackage{mathrsfs}
\usepackage{color}
\usepackage{stmaryrd}
\usepackage{amsmath}
\usepackage{amssymb}
\usepackage{bbm}
\usepackage{graphicx}
\usepackage{enumerate}
\usepackage{theorem}
\usepackage{booktabs}
\usepackage{multirow}
\usepackage{booktabs}
\usepackage{marvosym}
\usepackage{multirow}
\usepackage{hyperref}

\makeatletter
\def\tank#1{\protected@xdef\@thanks{\@thanks
        \protect\footnotetext[0]{#1}}}
\def\bigfoot{

    \@footnotetext}
\makeatother

\newcommand{\ea}{\end{array}}
\newtheorem{theorem}{Theorem}[section]
\newtheorem{proposition}{Proposition}[section]
\newtheorem{corollary}{Corollary}[section]
\newtheorem{lemma}{Lemma}[section]
\newtheorem{definition}{Definition}[section]
\newtheorem{remark}{Remark}[section]

{\theorembodyfont{\rmfamily}
}

\newenvironment{proof}{Proof.}

\begin{document}

\title{\Large \bf Stochastic curve shortening flow driven by a  transport-type pure jump L\'evy noise}

\author{{Xiaotian Ge}$^a$\footnote{E-mail:gxt@mail.ustc.edu.cn (X.T. Ge)} ~~~{Shijie Shang}$^a$\footnote{E-mail:sjshang@ustc.edu.cn (S.J. Shang)} ~~~{Weina Wu}$^{b}$\footnote{E-mail:wuweinaforever@163.com (W.N. Wu)} ~~~ {Jianliang Zhai}$^a$\footnote{E-mail:zhaijl@ustc.edu.cn (J.L. Zhai)}
\\
 \small  a.  School of Mathematical Sciences,
University of Science and Technology of China,\\
\small Hefei, Anhui 230026, China.\\
 \small  b. School of Economics, Nanjing University of Finance and Economics,\\
 \small  Nanjing, Jiangsu 210023, China.
}
\date{}
\maketitle

\begin{center}
\begin{minipage}{130mm}
{\bf Abstract:} We study the existence and uniqueness, the regularity, and the long-time behavior of strong solutions to stochastic curve shortening flow  driven by a transport-type pure jump L\'{e}vy noise. To obtain the existence and uniqueness of strong solutions, we transform the equation into its equivalent It\^{o}-type stochastic partial differential equation via a transport equation, and apply the monotone method with Lyapunov-type conditions. The obstacles to investigate the long-time behavior are the weak dissipativity and singularity inherent in the equation. To this end, we establish an improved regularity and prove that these solutions converge pathwise to zero at an exponential rate.

\vspace{3mm}
{\bf Keywords:} curve shortening flow; transport-type pure jump L\'{e}vy noise; well-posedness; regularity; long-time behavior

\vspace{3mm}
{\bf AMS 2020 Mathematics Subject Classification:} 60H15; 60G51
\end{minipage}
\end{center}

\section{Introduction}

In this paper, we are concerned with the following stochastic curve shortening flow driven by a transport-type pure jump L\'{e}vy noise:
\begin{equation}\label{s1eq2}
du(t,x) = \frac{\partial_{xx} u(t,x)}{1 + (\partial_x u(t,x))^2} dt + \epsilon \partial_x u(t,x) \diamond dL_t, \quad (t,x) \in \Bbb{R}_+\times \mathbb{T},
\end{equation}
where $\partial_x$ denotes the weak derivative operator with respect to $x$, $\epsilon$ is a positive constant which represents the intensity of noise, $\diamond $ denotes the Marcus integral, $\Bbb{R}_+=[0, \infty)$, $\mathbb{T} = \mathbb{R}/\mathbb{Z} \cong [0,1]$ is the one-dimensional torus. $\{L_t\}_{t \geq 0}$ is an $\mathbb{R}$-valued pure jump L\'{e}vy process defined on $(\Omega, \mathcal{F}, \{\mathcal{F}_t\}_{t \geq 0}, \mathbb{P})$, which is a complete probability space equipped with a filtration $\{\mathcal{F}_t\}_{t \geq 0}$ satisfying the usual conditions. Please see Section \ref{Main result} for the detailed exposition of \eqref{s1eq2}.

The curve shortening flow is the one-dimensional mean curvature flow, evolving a smooth planar curve by moving each point normally inward at a speed proportional to its curvature. As the simplest gradient flow of the surface area energy, mean curvature flow has many applications, for instance, it arises as an asymptotic limit of the Allen-Cahn model for phase boundary motion in binary alloys, see e.g., \cite{MR3155251,MR2815949,MR1931534} and the references therein. In addition, the research methods and results for it can provide certain insight into some related complex problems. Mathematically, curve shortening flow corresponds to the graph formulation of $(1+1)$-dimensional mean curvature flow, given by \eqref{s1eq2} with $\epsilon=0$. It is a fundamental geometric evolution that serves as a paradigmatic model for studying singularity formation, gradient flows and topological changes, driving developments in geometric analysis. Besides, its theoretical importance is complemented by practical utility, modeling interface dynamics in materials science, and supporting applied tasks in image processing and computer vision, see e.g., \cite{MR1225209,CKS1997,G2020,MR1343443,MB2020} and the references therein for further information.

Up to our knowledge, stochastic mean curvature flow was first proposed by Kawasaki and Ohta in \cite{10.1143/PTP.67.147} (see also \cite{MR1959710}) as the following model (in graph form):
\begin{equation}\label{s1eq4}
du(t,x)=\sqrt{1+(\partial_x u(t,x))^2}\,{\rm div}\left(\frac{\partial_x u(t,x)}{\sqrt{1+(\partial_x u(t,x))^2}}\right)dt+
\phi(x,u(t,x))\sqrt{1+(\partial_x u(t,x))^2}\circ dB_t.
\end{equation}
Since then, the following two stochastic curve shortening flow models have been widely discussed, i.e.,
\begin{equation}\label{s1eq0.2}
du(t,x)=\frac{\partial_{xx} u(t,x)}{1 + (\partial_x u(t,x))^2}dt+dW_t
\end{equation}
and
\begin{equation}\label{s1eq0.3}
du(t,x)=\frac{\partial_{xx} u(t,x)}{1 + (\partial_x u(t,x))^2}dt+\sqrt{1+(\partial_x u(t,x))^2}\circ dB_t.
\end{equation}
Here $\{W_t\}_{t\geq0}$ is a $Q$-Wiener process, $Q$ is a Hilbert-Schmidt operator, $\circ$ denotes the Stratonovich integral, and $\{B_t\}_{t\geq0}$ is a one-dimensional Brownian motion. In \cite{MR2888287,MR2996423}, the well-posedness and long-time behavior of \eqref{s1eq0.2} and \eqref{s1eq0.3} were studied under the variational framework of stochastic partial differential equations (SPDEs). The well-posedness of \eqref{s1eq0.3} has also been considered by using the stochastic variational inequality solutions framework and the entropy solutions framework,  see \cite{MR3605963} and \cite{MR4089785}, respectively. In \cite{MR4674013}, \eqref{s1eq0.3} was treated under the stochastic viscosity solutions framework and its pathwise long-time behavior was also provided. We refer the readers to \cite{MR4221662,MR2888287,MR4674013,MR4692883,MR3651056} for more surveys on \eqref{s1eq4} and its related topics, and \cite{MR3205643,MR3053478,MR4646815} for more results on the well-posedness and ergodicity for equations including \eqref{s1eq0.2}.

Although there have been some studies on stochastic curve shortening flow driven by continuous noise as discussed above,  there is no result on stochastic curve shortening flow driven by jump noise so far. To investigate this, we employ L\'evy noise due to its flexibility in modeling non-Gaussian, heavy-tailed, and long-range dependent phenomena (see e.g., \cite{MR2356959} for more knowledge about L\'evy noise). Specifically, in this paper, we focus on \eqref{s1eq2} with a linear, transport-type pure jump L\'evy noise. For systems with continuous noise,  from
a physical point of view, the Stratonovich integral  (as in \eqref{s1eq4} and \eqref{s1eq0.3}) is a natural choice because it respects the chain rule and is invariant under changes of coordinates, while the Marcus canonical form (cf. \cite{KPP-95,Marcus-78} for more details) is specifically designed as a natural extension of the Stratonovich calculus to the jump processes. In our case, we use the Marcus canonical integral, which consistently preserves the chain rule and captures the geometry of jumps. Furthermore, it is the extension of the Wong-Zakai principle to discontinuous settings, i.e., if a jump process (e.g., L\'evy process or semi-martingale with jumps) is approximated by a sequence of appropriately converging smooth processes, the solutions of the approximating ordinary differential equations converge precisely to the solution of the corresponding Marcus canonical stochastic differential equation. Therefore, from both theoretical and applied perspectives, it is significant to study equations driven by jump noises in Marcus canonical form. For recent progress on its applications to SPDEs, see works on Landau-Lifshitz-Gilbert equations (cf. \cite{BM2019}),  nonlinear Schr\"odinger equations (cf. \cite{BLZ2021,ZLZ2026}), 2D constrained Navier-Stokes equations (cf. \cite{MA2021}),  2D Euler equations (cf. \cite{LT2025}) and transport equations (cf. \cite{FPR2025}).

The primary objectives of our work (cf. Theorem \ref{c2th2} for the main result) are twofold. Firstly, we establish the existence and uniqueness of strong solutions to \eqref{s1eq2}. Secondly, we prove that these solutions converge pathwise to zero at an exponential rate.

To establish the existence and uniqueness of solutions, we adopt the monotone method with Lyapunov-type conditions. To be precise, we use the methods and results from \cite[Theorem~1.2]{MR3158475} for fluid equations driven by jump noise, and the approach used in \cite[Theorem~2.3]{MR2888287} for degenerate nonlinear stochastic evolution equations driven by Brownian motion within the Krylov-Rozovski\u{\i} variational framework, in which the coercivity assumption is replaced by a Lyapunov-type condition. However, due to the features of \eqref{s1eq2} itself, especially the noise term with Marcus-type integral, we need  to apply different techniques and deal with some estimations that are not encountered in \cite{MR3158475} or \cite{MR2888287}. In the proof, we first transform \eqref{s1eq2} into its equivalent It\^{o}-type SPDE (cf. \eqref{c2eq6}) via a transport equation (cf.  \eqref{c2eq1}). The task then reduces to the proof of well-posedness for \eqref{c2eq6} (cf. Proposition \ref{s6th1}), but with more complicated noise terms than the standard It\^o noise and with an additionally correction term. Following from the strategies of \cite{MR3158475} and \cite{MR2888287}, part of our main efforts here are the estimations of such terms in verifying the hemicontinuity, local monotonicity, Lyapunov condition and boundedness, where the properties of the  transport equation (cf. Lemma \ref{c2th1}) also play crucial roles, see for instance, \eqref{s6eq10}, \eqref{s6eq19}, \eqref{s6eq24}, etc.

The obstacles in investigating the long-time behavior of solutions to (stochastic) curve shortening flow arise primarily from the weak dissipativity (or certain degeneracy) and singularity inherent in the equation itself, see e.g., Remark \ref{dissipativity} for more details. In \cite{MR2888287}, they directly analyze the equation but have to restrict the noise intensity to satisfy $\epsilon\leq\sqrt{2}$. Different from \cite{MR2888287}, our method establishes an improved regularity, and has no restriction on the noise intensity. This is achieved by a four-step proof, which are given as Lemmas \ref{s6th3} $\sim$ \ref{s6th5} and Proposition \ref{s6th6+1}. More specifically, firstly, we obtain a pathwise identity (cf. \eqref{s6eq26}) for the $W^{1,2}(\mathbb{T})$-norm  of the Galerkin approximating equations to \eqref{c2eq6} (which is the equivalent It\^{o}-type form of \eqref{s1eq2}). Secondly, we prove that the solutions of the approximating equations converge strongly to the solution of \eqref{c2eq6} in $L^2(\mathbb{T})$. Thirdly, by constructing a monotone function and applying pseudo-monotonicity techniques, we are able to pass to the limit and obtain the corresponding pathwise $W^{1,2}(\mathbb{T})$-norm inequality for the solution of \eqref{c2eq6}. Finally, we get a pathwise $W^{2,1}(\mathbb{T})$-norm estimate for the solution to \eqref{c2eq6} (hence also \eqref{s1eq2}) by means of the established $W^{1,2}(\mathbb{T})$-norm inequality.
The essential tools we use here are the properties of maximal monotone operators and a certain energy-conservation property of Marcus noise, which enable us to gain the pathwise properties and further derive the pathwise convergence.
Once the improved regularity holds, the long-time behavior of  solutions to \eqref{c2eq6} (hence also \eqref{s1eq2}) follows by using appropriate Sobolev embeddings and properties of solutions to the transport equation \eqref{c2eq2}. We emphasize that, this is the first result on the pathwise exponential decay for stochastic curve shortening flow.

The rest of this paper is organized as follows. In Section \ref{Main result}, we give a detailed exposition of the curve shortening flow driven by a transport-type pure jump L\'evy noise in the Marcus canonical form, and state the main result: the well-posedness, regularity and long-time behavior of \eqref{s1eq2}.  Section \ref{wellposedness} is devoted to the proof of well-posedness. The regularity and long-time behavior are then established in Section \ref{longtime}.

Throughout the paper, for some space $\mathbb{H}$, we denote by $\mathcal{B}(\mathbb{H})$ the Borel $\sigma$-algebra in $\mathbb{H}$, $||\cdot||_{\mathbb{H}}$ the norm in $\mathbb{H}$,  $L^\infty([0,\infty);\mathbb{H})$ the space of all $\mathbb{H}$-valued  essentially bounded measurable functions on $[0,\infty)$, $L^2([0,\infty);\mathbb{H})$ the space of all $\mathbb{H}$-valued square-integrable functions on $[0,\infty)$, $L^2([0,T];\mathbb{H})$ the space of all $\mathbb{H}$-valued square-integrable functions on $[0,T]$, $L^2([0,T]\times\Omega;\mathbb{H})$ the space of all $\mathbb{H}$-valued square-integrable functions on
$[0,T]\times\Omega$, $L^1([0,T]\times\Omega;\mathbb{H})$ the space of all $\mathbb{H}$-valued Bochner-integrable functions on $[0,T]\times\Omega$. We denote by $L^2(\mathbb{T})$ the Lebesgue space on $\mathbb{T}$, and $L^2([0,T]\times\mathbb{T})$ the space of all $\Bbb{R}$-valued square-integrable functions on
$[0,T]\times\mathbb{T}$. Let $W^{1,2}(\mathbb{T})$ and $W^{2,1}(\mathbb{T})$ denote the well-known Sobolev spaces.
For notational convenience, we set $H^1(\mathbb{T}): = W^{1,2}(\mathbb{T})$, and let $H^{-1}(\mathbb{T})$ be the dual space of $H^1(\mathbb{T})$ with respect to the inner product on $L^2(\mathbb{T})$.

\section{Curve shortening flow driven by transport-type noise in Marcus canonical form}\label{Main result}
\setcounter{equation}{0}
 \setcounter{definition}{0}


Let $\{L_t\}_{t\geq0}$ be a real-valued pure jump L\'evy noise of the following form:
$$L_t=\int_0^t \int_{\{|z|\leq1\}} z\, \tilde{N} (dzds)+\int_0^t \int_{\{|z|>1\}}z\, N(dzds),\quad\forall t\geq0.
$$
Here $N$ is a time homogeneous Poisson random measure  from $\mathcal{B}(\Bbb{R})\otimes\mathcal{B}(\mathbb{R}_+)\otimes\mathcal{F}$ to $\mathbb{N}\cup\{0,\infty\}$ with the intensity measure $\nu$, and $\nu$ is a $\sigma$-finite measure on $(\Bbb{R},\mathcal{B}(\Bbb{R}))$ such that  $\nu(\{0\})=0$ and
$\int_{\Bbb{R}}(|z|^2\wedge1)\,\nu(dz)<\infty,$
$\tilde{N}(dzds):=N(dzds)-\nu(dz)ds$ is the compensated Poisson random measure.

Recall \eqref{s1eq2} in the introduction, for simplicity, from now on, we write $u(t):=u(t,x)$, i.e.,
\begin{equation}\label{c2eq1}
\begin{cases}
du(t)=\dfrac{\partial_{xx} u(t)}{1+(\partial_x u(t))^2}dt+\epsilon\partial_x u(t)\diamond dL_t,\quad \forall t>0, \\
u(0)=u_0.
\end{cases}
\end{equation}
Note that \eqref{c2eq1} can be transformed to a SPDE of the It\^{o} form. To be precise, consider the following PDE:
\begin{equation}\label{c2eq2}
\begin{cases}
\dfrac{\partial\Phi(\theta,z,u)}{\partial \theta}=\epsilon z \partial_x \Phi(\theta,z,u),\quad \forall \theta\in[0,1],\\
\Phi(0,z,u)=u\in H^1(\mathbb{T}),
\end{cases}
\end{equation}
where $\epsilon$ is the same as the one in \eqref{c2eq1}, $z\in\mathbb{R}$ is the jump size of $\{L_t\}_{t\geq0}$. It is easy to see that \eqref{c2eq2} is a transport equation and satisfies the following lemma.

\begin{lemma}
\label{c2th1} Let $u\in H^1(\mathbb{T})$, then \eqref{c2eq2} admits a unique weak solution such that for all $(x,\theta)\in \mathbb{T}\times [0,1]$, $\Phi(\theta,z,u)(x)=u(x+\epsilon z \theta)$,  and $\partial_{\theta\theta}\Phi(\cdot,z,u)=(\epsilon z)^2\partial_{xx}\Phi(\cdot,z,u)\in L^2([0,1];H^{-1}(\mathbb{T}))$.
\end{lemma}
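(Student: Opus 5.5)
The plan is to treat existence by explicit construction and uniqueness by an energy argument. First I fix what ``weak solution'' means. It is a function $\Phi(\cdot,z,u)\in C([0,1];L^2(\mathbb{T}))$ with $\partial_\theta\Phi\in L^2([0,1];L^2(\mathbb{T}))$ (or at least in $L^2([0,1];H^{-1}(\mathbb{T}))$). It satisfies $\partial_\theta\Phi=\epsilon z\partial_x\Phi$ in $L^2([0,1];H^{-1}(\mathbb{T}))$, equivalently when tested against smooth functions on $\mathbb{T}$, and it has initial datum $u$.

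\textbf{Existence.} I set $\Psi(\theta)(x):=u(x+\epsilon z\theta)$. Since translations are isometries on $H^1(\mathbb{T})$ and are strongly continuous, $\Psi\in C([0,1];H^1(\mathbb{T}))$ with $\|\Psi(\theta)\|_{H^1}=\|u\|_{H^1}$.

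To check the equation, I first take $u$ smooth (a trigonometric polynomial). Then $\partial_\theta\Psi=\epsilon z\,u'(x+\epsilon z\theta)=\epsilon z\partial_x\Psi$ holds classically, so
$$\Psi(\theta)-u=\epsilon z\int_0^\theta \partial_x\Psi(s)\,ds .$$
I then pass to general $u\in H^1(\mathbb{T})$ by density. Translation commutes with $\partial_x$ and preserves norms, so both sides of this identity converge in $C([0,1];L^2(\mathbb{T}))$. This gives $\partial_\theta\Psi=\epsilon z\partial_x\Psi\in C([0,1];L^2(\mathbb{T}))$.

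\textbf{Second derivative.} Differentiating once more in $\theta$ gives $\partial_{\theta\theta}\Psi=\epsilon z\partial_x\partial_\theta\Psi=(\epsilon z)^2\partial_{xx}\Psi$ in the distributional sense. Here $\partial_{xx}$ is a bounded map $H^1\to H^{-1}$, and it too commutes with translations. The justification again goes via smooth approximations: both sides converge in $C([0,1];H^{-1}(\mathbb{T}))$, and $\|\partial_{xx}\Psi(\theta)\|_{H^{-1}}\le\|u\|_{H^1}$. Hence $\partial_{\theta\theta}\Psi\in L^\infty([0,1];H^{-1})\subset L^2([0,1];H^{-1}(\mathbb{T}))$.

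\textbf{Uniqueness.} By linearity it suffices to show that a weak solution $w$ with $w(0)=0$ vanishes. I expect this to be the only delicate step, because $w$ is a priori only $L^2$-valued while $\partial_\theta w$ lies in $H^{-1}$, so the energy pairing is not directly defined.

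I would handle this by mollification in $x$. For a standard mollifier $\rho_\delta$ on $\mathbb{T}$, convolution commutes with $\partial_x$ because the coefficient $\epsilon z$ is constant. Hence $w_\delta:=w*\rho_\delta$ is smooth in $x$ and solves the same equation strongly. Then
$$\frac{d}{d\theta}\|w_\delta\|_{L^2}^2=2\epsilon z\int_{\mathbb{T}}w_\delta\,\partial_x w_\delta\,dx=\epsilon z\int_{\mathbb{T}}\partial_x (w_\delta^2)\,dx=0$$
by periodicity, so $w_\delta\equiv0$. Letting $\delta\to0$ gives $w\equiv0$.

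Because the coefficient is constant, no commutator estimate (DiPerna--Lions type) is needed. Alternatively, one can use Fourier modes: each coefficient satisfies $\hat w_k'=2\pi i k\epsilon z\,\hat w_k$ with $\hat w_k(0)=0$, hence $\hat w_k\equiv0$. That argument is perhaps the cleanest, and it also re-derives the explicit formula $\hat\Phi_k(\theta)=e^{2\pi i k\epsilon z\theta}\hat u_k$.
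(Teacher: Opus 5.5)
Your proof is correct. The paper gives no proof of this lemma; it only asserts, right after \eqref{c2eq2}, that the statement is easy to see from the explicit translation formula. So there is no argument of the authors to follow or depart from, and your write-up supplies the omitted details.

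Two of the facts you establish are exactly what the paper uses later:
\begin{itemize}
\item $\Phi(\cdot,z,u)\in C^1([0,1];L^2(\mathbb{T}))$ with $\partial_\theta\Phi=\epsilon z\partial_x\Phi$, which justifies the integration in $\theta$ in \eqref{s6eq10};
\item $\theta\mapsto\partial_x\Phi(\theta,z,u)$ is $C^1$ into $H^{-1}(\mathbb{T})$ with derivative $\epsilon z\partial_{xx}\Phi$, which, combined with the first fact, justifies the double integration in \eqref{s6eq3}.
\end{itemize}

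Your Fourier formula $\hat\Phi_k(\theta)=e^{2\pi {\rm i}k\epsilon z\theta}\hat u_k$ also gives directly the invariance of $H^n$ and the isometry of $\Phi(\theta,z,\cdot)$ on $H$ and $V$, which the paper relies on in \eqref{s6eq28} and \eqref{eq202502}.

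One cosmetic point: the pointwise identity ``for all $x$'' is meaningful only for the continuous representative of $u\in H^1(\mathbb{T})$, so you should say that this is the representative you use.
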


Let $u\in L^2(\mathbb{T})$, denote by $[u]:=\int_\mathbb{T} u(r)\,dr$ the mean value of $u$ on $\mathbb{T}$. From now on, take
$$H=\dot{L}^2(\mathbb{T}):=\left\{u\in L^2(\mathbb{T}):\,[u]=0\right\},\quad\quad V=\dot{H}^1(\mathbb{T}):=\left\{u\in H^1(\mathbb{T}):\,[u]=0\right\},$$
and endow the spaces with the following inner products, respectively,
$$\left<u,v\right>_H=\int_\mathbb{T} u\cdot v \,dx,\quad \forall u,v\in H,\quad\quad\quad\left<u,v\right>_V=\int_\mathbb{T} \partial_x u\cdot \partial_x v \,dx,\quad \forall u,v\in V.$$

For any $k\in\mathbb{Z}:=\{0,\pm 1,\pm2,\cdots\}$, let
\begin{eqnarray}\label{ek}
 e_k:=e_k(x)=e^{2\pi {\rm i}kx}, ~ \text{with}~~ {\rm i}=\sqrt{-1},\quad \forall x\in \mathbb{T},
\end{eqnarray}
 then $\{e_k\}_{k\in\mathbb{Z}\backslash\{0\}}$ is an orthonormal basis of $H$, which is also an orthogonal basis of $V$. Take $H^n:={\rm span}\{e_k,0<|k|\leq n\}$, then according to Lemma \ref{c2th1}, we know that if $u\in H^n$, one has $\Phi(\theta,z,u)\in H^n$ for all $(z,\theta,\epsilon)\in\mathbb{R}\times[0,1]\times(0,\infty)$.

Let $\left<\cdot,\cdot\right>_{L^2}$ denote the inner product on $L^2(\mathbb{T})$.
If we identify the Hilbert space $H$ with its dual space $H^*$ by the Riesz representation, then we obtain a Gelfand triple $V\subset H\subset V^*$.
Define an operator $A: V\to V^*$ such that for all $u,v\in V$,
$$\ _{V^*}{\left<Au,v\right>}{_{V}}
=-\left<\arctan \partial_x u,\partial_x v\right>_{L^2}.$$

\begin{remark}\label{dissipativity}
Note that for $u, v\in H^1(\mathbb{T})$, one has
$$
_{H^{-1}(\mathbb{T})}{\langle Au-Av, u-v\rangle}_{H^1(\mathbb{T})}
=
-\int_{\mathbb{T}}
\bigl(\arctan(\partial_x u)-\arctan(\partial_x v)\bigr)
\bigl(\partial_x u-\partial_x v\bigr)dx
\leq 0,
$$
which implies the weak dissipativity (or certain degeneracy) of \eqref{c2eq1} (also \eqref{s1eq2}).
In particular, for $u\in H^1(\mathbb{T})$, one has $_{H^{-1}(\mathbb{T})}{\langle Au, u\rangle}_{H^1(\mathbb{T})}
=-\left<\arctan \partial_x u,\partial_x u\right>_{L^2}\leq 0,$ which indicates the singularity of \eqref{c2eq1} (also \eqref{s1eq2}) in the sense that, unlike the usual Laplace operator, it does not provide any direct gain of regularity for the solution.
\end{remark}

\begin{remark}
Since $\partial_x v$ and $\arctan \partial_x u$ might not be elements of $H$, we used the inner product of $L^2(\mathbb{T})$ as defined above. To avoid confusion, in the following, $\left<\cdot,\cdot\right>_H$ and $\|\cdot\|_H$ will be restricted to elements that are confirmed to be in $H$.
\end{remark}

By Lemma \ref{c2th1} and the definition of Marcus integral, \eqref{c2eq1} can be transformed  into the following It\^{o} form:
\begin{equation}\label{c2eq6}
\left\{
\begin{aligned} du(t)=&\,Au(t)dt+\int_{\{|z|\leq1\}}\Big( \Phi(1,z,u(t-))-u(t-)\Big)\tilde{N}(dzdt)\\
&+\int_{\{|z|>1\}} \Big(\Phi(1,z,u(t-))-u(t-)\Big) N(dzdt)\\
&+\int_{\{|z|\leq1\}} \Big(\Phi(1,z,u(t))-u(t)-\epsilon z\partial_x u(t)\Big)\nu(dz)dt,\quad \forall t>0,\\
u(0)=&\,u_0,
\end{aligned}
\right.
\end{equation}
where the second and third terms on the right-hand side are the noise terms, and the fourth term is usually called the correction term.

\begin{definition}\label{s2 df1}
  Let $u_0\in V$. An $H$-valued c$\grave{a}$dl$\grave{a}$g $\mathcal{F}_t$-adapted process $u:=\{u_t\}_{t\geq0}$ is called a strong solution to \eqref{c2eq6}, if \\
{\rm(i)} there exists a $dt\otimes\mathbb{P}$-equivalent class $\hat{u}$ of $u$ such that $\hat{u}\in L^2_{loc}([0,\infty);V)$, $\mathbb{P}$-a.s.;\\
{\rm (ii)} for all $t\in[0,\infty)$, the following equality
\begin{equation}\label{c2eq61}
\begin{aligned} u(t)=&u_0+\int_0^tA\bar{u}(s)ds+\int_0^t\int_{\{|z|\leq1\}}\Big( \Phi(1,z,\bar{u}(s))-\bar{u}(s)\Big)\tilde{N}(dzds)\\
&+\int_0^t\int_{\{|z|>1\}} \Big(\Phi(1,z,\bar{u}(s))-\bar{u}(s)\Big) N(dsdt)\\
&+\int_0^t\int_{\{|z|\leq1\}} \Big(\Phi(1,z,\bar{u}(s))-\bar{u}(s)-\epsilon z\partial_x \bar{u}(s)\Big)\nu(dz)ds,\ \text{holds\ in}\ V^*,\ \mathbb{P}\text{-}a.s.,
\end{aligned}
\end{equation}
where $\bar{u}$ is a $V$-valued progressively measurable $dt\otimes\mathbb{P}$ version of $\hat{u}$.
\end{definition}

\begin{remark}
Definition \ref{s2 df1} is motivated by \cite[Definition 1.1]{MR3158475}, in which the authors give the definition of strong solutions for SPDE with locally monotone coefficients driven by L\'evy noise. Here $\hat{u}$ and $\bar{u}$ are introduced precisely because they are required by the definition of Bochner integral as explained in \cite[Remark 1.1]{MR3158475}.
\end{remark}

Below is the main result on the well-posedness and long-time behavior of \eqref{c2eq6} (hence also \eqref{c2eq1}).

\begin{theorem} \label{c2th2}
Let $u_0\in V$. Then there exists a unique strong solution $u$ to \eqref{c2eq6} in the sense of Definition \ref{s2 df1}. Moreover, $u\in L^2([0,\infty); W^{2,1}(\mathbb{T}))\cap
L^\infty([0,\infty);V)$, $\mathbb{P}$\text{-}a.s., and   $||u(t)||_V^2\to0$ with exponential rate in the sense that there exists a constant $k_0>0$ (independent of $t$ and $\omega$) such that
\begin{equation}\label{c2eq9}
\lim_{t\to\infty}e^{k_0 t}||u(t)||^2_V=0, \ \mathbb{P}\text{-}a.s.
\end{equation}
\end{theorem}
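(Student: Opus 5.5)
The proof splits into well-posedness and long-time behaviour, following the outline in the introduction.

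\textbf{Well-posedness.} We work with the It\^o form \eqref{c2eq6} and apply the locally monotone framework with jumps of \cite[Theorem~1.2]{MR3158475}. As in \cite[Theorem~2.3]{MR2888287}, the coercivity condition is replaced by a Lyapunov-type condition based on the $V$-norm. The drift $A$ is hemicontinuous and monotone by Remark \ref{dissipativity}, since $\arctan$ is nondecreasing. It is also bounded, $\|Au\|_{V^*}\le \|\arctan\partial_x u\|_{L^2}\le \tfrac{\pi}{2}$, which gives the growth condition for free.

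The noise and correction terms are handled with Lemma \ref{c2th1}. The map $\Phi(1,z,\cdot)$ is a translation, hence a linear isometry on $H$ and on $V$, and it commutes with $\partial_x$. Consequently
\[
\|\Phi(1,z,u)-\Phi(1,z,v)-(u-v)\|_H^2\le C(|z|^2\wedge 1)\|u-v\|_V^2 .
\]
By Taylor's formula in $\theta$ using $\partial_{\theta\theta}\Phi=(\epsilon z)^2\partial_{xx}\Phi$, the correction integrand satisfies $\big|\langle \Phi(1,z,u)-u-\epsilon z\partial_xu,\,u\rangle_H\big|\lesssim |z|^2\|u\|_V^2$. The decisive observation is the cancellation coming from isometry:
\[
\|\Phi(1,z,u)\|_H^2-\|u\|_H^2=0 ,
\]
and the same holds with the $V$-norm. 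In the It\^o formula for $\|u\|_H^2$, the jump terms together with the correction term then collapse to a martingale and no dangerous drift survives. This energy conservation under Marcus noise also supplies the Lyapunov condition for $\|u\|_V^2$. Large jumps $|z|>1$ occur finitely often on bounded intervals, so we treat them by interlacing: solve the equation between consecutive big jumps and apply the translation at each jump time. Galerkin approximation on $H^n$ works because $H^n$ is invariant under $\Phi$. Uniqueness follows from It\^o's formula for $\|u-v\|_H^2$, monotonicity of $A$, and the isometry of the jump part.

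\textbf{Regularity and decay.} For the Galerkin solutions $u^n$, It\^o's formula for $\|u^n\|_V^2$, combined with translation invariance, gives the pathwise identity
\[
\|u^n(t)\|_V^2+2\int_0^t\!\int_{\mathbb T}\frac{(\partial_{xx}u^n)^2}{1+(\partial_xu^n)^2}\,dx\,ds=\|u_0\|_V^2 .
\]
Here the jump contributions vanish exactly, and not merely in expectation. Next we show $u^n\to u$ strongly in $L^2([0,T];H)$ almost surely, which follows from the monotonicity estimate. We then pass to the limit to obtain the inequality version of this identity. The dissipation term is lower semicontinuous: it can be written as $\int |\partial_x G(\partial_x u)|^2$ with $G'(s)=(1+s^2)^{-1/2}$, a convex functional of $u$, and a pseudo-monotone argument applies. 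Hence $u\in L^\infty([0,\infty);V)$ and $\int_0^\infty\!\int (\partial_{xx}u)^2/(1+(\partial_xu)^2)<\infty$.

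For $W^{2,1}$, Cauchy--Schwarz gives
\[
\|\partial_{xx}u\|_{L^1}^2\le \int\frac{(\partial_{xx}u)^2}{1+(\partial_xu)^2}\,dx\;\int(1+(\partial_xu)^2)\,dx ,
\]
and the second factor is bounded by $1+\|u_0\|_V^2$. This yields $u\in L^2([0,\infty);W^{2,1})$.

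For exponential decay we use $\|\partial_x u\|_{L^\infty}\le\|\partial_{xx}u\|_{L^1}$, which holds because $\partial_x u$ has zero mean, together with Poincar\'e's inequality on $\partial_xu$. These give
\[
\int\frac{(\partial_{xx}u)^2}{1+(\partial_xu)^2}\,dx\ \ge\ \frac{4\pi^2\|u\|_V^2}{1+\|\partial_xu\|_\infty^2}.
\]
The main obstacle is to bound $\|\partial_xu\|_\infty$ uniformly in time so that this becomes a linear differential inequality. I would first show pathwise that the identity/inequality holds from any starting time $s$, so that $t\mapsto\|u(t)\|_V^2$ is nonincreasing. I would then use the smallness of $\|\partial_{xx}u\|_{L^1}$ on a set of times of large measure, together with the maximum-principle-type bound $\|\partial_x u(t)\|_\infty\le\|\partial_x u(s)\|_\infty$. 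Since translations preserve the $L^\infty$ norm, I expect this bound to survive the jumps. Once $\|\partial_xu\|_\infty\le M(\omega)$ holds after some random time, Gronwall gives decay at a rate $k(M)$. A rate $k_0$ independent of $\omega$ is then obtained because $\|\partial_x u\|_\infty\to0$ eventually, so any $k_0<4\pi^2$ works.
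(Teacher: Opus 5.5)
Your well-posedness and regularity steps follow the paper's route. One caveat: $u\mapsto\int_{\mathbb{T}}\frac{(\partial_{xx}u)^2}{1+(\partial_x u)^2}dx$ is not convex in $u$, because $q^2/(1+p^2)$ is not convex in $p$. Lower semicontinuity therefore comes only from identifying the weak limit of $f(\partial_x u^{(n)})$ as $f(\partial_x u)$ by maximal monotonicity, as in Lemma~\ref{s6th5}(ii).

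\textbf{The gap is in the decay step.} Your argument rests on the maximum-principle bound $\|\partial_x u(t)\|_{L^\infty}\le\|\partial_x u(s)\|_{L^\infty}$, which you only say you \emph{expect}. It is plausible, since the noise acts by translations, but nothing in your construction delivers it. The Galerkin scheme gives no control of $\|\partial_x u^{(n)}\|_{L^\infty}$: the test functions an $L^p$ or maximum-principle argument needs do not lie in $H^n$, and $\Pi_n$ is not an $L^\infty$ contraction. The strong solution is only known to satisfy $\partial_x u\in L^\infty_tL^2_x$ and $\partial_{xx}u\in L^2_tL^1_x$. You would need a different approximation scheme, or an identification with translates of a classical solution plus parabolic regularity theory. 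Both your random bound $M(\omega)$ and your claim $\|\partial_x u(t)\|_{L^\infty}\to0$ depend on this monotonicity. Square integrability of $\|\partial_{xx}u\|_{L^1}$ alone only gives $\liminf_{t\to\infty}\|\partial_x u(t)\|_{L^\infty}=0$. As written, \eqref{c2eq9} is therefore not proved.

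\textbf{How the paper avoids it.} The paper never touches $\|\partial_x u\|_{L^\infty}$.
\begin{enumerate}
\item It uses the exact pathwise identity $\frac{d}{dt}\|u\|_H^2=-2\int_{\mathbb{T}}\arctan(\partial_x u)\,\partial_x u\,dx$.
\item Jensen's inequality for the convex function $x\arctan x$, together with $\|\partial_x u\|_{L^1}\le\|u_0\|_V$, gives $\|u(t)\|_H^2\le\|u_0\|_H^2e^{-k_1t}$ (Lemma~\ref{s6th20250801}).
\item The bound $\|u\|_V^2\le C\|\partial_x u\|_{L^1}\|\partial_{xx}u\|_{L^1}$ and Cauchy--Schwarz in time make $\int_t^\infty\|u(s)\|_V^2ds$ decay exponentially.
\item Monotonicity of $\|u(t)\|_V$ turns this into a pointwise bound.
\end{enumerate}

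\textbf{A fix within your own setup.} Put the weight $1+(\partial_x u)^2$ in $L^1$ rather than $L^\infty$. Your own $W^{2,1}$ Cauchy--Schwarz inequality, together with $\|\partial_x u\|_{L^2}\le\|\partial_x u\|_{L^\infty}\le\|\partial_{xx}u\|_{L^1}$ on the unit torus, gives
$$\int_{\mathbb{T}}\frac{(\partial_{xx}u)^2}{1+(\partial_x u)^2}dx\ \ge\ \frac{\|\partial_{xx}u\|_{L^1}^2}{1+\|u_0\|_V^2}\ \ge\ \frac{\|u\|_V^2}{1+\|u_0\|_V^2}.$$
Insert this into the energy inequality from arbitrary starting times (Corollary~\ref{s6th6}), and use that $\|u(\cdot)\|_V^2$ is nonincreasing over a partition of $[0,t]$. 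This yields $\|u(t)\|_V^2\le\|u_0\|_V^2e^{-t/(1+\|u_0\|_V^2)}$ pathwise. Hence \eqref{c2eq9} holds for any deterministic $k_0<(1+\|u_0\|_V^2)^{-1}$, with no $L^\infty$ bound on $\partial_x u$ needed.
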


\begin{remark}
In most works on the existence and uniqueness of solutions to SPDEs (see e.g., \cite{MR3158475,MR2888287,LT2025}), one can only obtain the local boundedness $u\in L^2_{\mathrm{loc}}([0,\infty); W^{2,1}(\mathbb{T}))\cap L^\infty_{\mathrm{loc}}([0,\infty);V)$, $\mathbb{P}$-a.s.,
i.e., for any $T\in[0,\infty)$, $\int_0^T\|u(t)\|^2_{W^{2,1}(\mathbb{T})}dt+\sup_{t\in[0,T]}\|u(t)\|_{V}<\infty$, $\mathbb{P}$-a.s. However, in our work, we can get the strong solution $u$ to \eqref{c2eq6} satisfying $
u\in L^2([0,\infty); W^{2,1}(\mathbb{T}))\cap L^\infty([0,\infty);V)$, $\mathbb{P}$-a.s., i.e., $
\int_0^\infty\|u(t)\|^2_{W^{2,1}(\mathbb{T})}dt+\sup_{t\in[0,\infty)}\|u(t)\|_{V}<\infty$, $\mathbb{P}$-a.s.
\end{remark}

The proof of Theorem \ref{c2th2} is mainly divided into two steps. Firstly, we prove the existence and uniqueness of strong solutions to \eqref{c2eq6} in Section \ref{wellposedness}. Secondly, we show the regularity result and long-time behavior \eqref{c2eq9}, see Proposition \ref{s6th6+1} and Proposition \ref{s6th2025} in Section \ref{longtime}.

For convenience, from now on, we write
$$F(z,h):=\Phi(1,z,h)-h,\;~~~G(z,h):=\Phi(1,z,h)-h-\epsilon z\partial_x h, \quad \forall z\in\mathbb{R},~ \forall h\in V,$$
and introduce the following operator $\tilde{A}:V\to V^*$, i.e.,
$$\tilde{A}h:=Ah+\int_{\{|z|\leq1\}} G(z,h)\nu(dz).$$
Clearly, if $h\in V$, we then have $[F(z,h)]=[G(z,h)]=0$, and
\begin{equation*}\label{s6eq2}
||F(z,h)||_V\leq ||\partial_x \Phi(1,z,h)||_{L^2}+||h||_V\leq2||h||_V, \quad \forall z\in\mathbb{R}.
\end{equation*}

\section{Well-posedness}\label{wellposedness}
\setcounter{equation}{0}
 \setcounter{definition}{0}

We will only prove the well-posedness of \eqref{c2eq6} without large jumps as following:
\begin{equation}\label{s6eq1}
\begin{cases}
d\tilde{u}(t)=\tilde{A}\tilde{u}(t)dt+\int_{\{|z|\leq1\}} F(z,\tilde{u}(t-))\tilde{N}(dzdt),\quad \forall t>0,\\
\tilde{u}(0)=u_0\in V.
\end{cases}
\end{equation}
For the well-posedness of \eqref{c2eq6}, it is then easy to obtain by using the standard interlacing procedure as in e.g., \cite[section 4.2]{MR3158475}. Since the treatments are similar, we omit the proof of this part.

\begin{proposition}\label{s6th1}
Let $u_0\in V$, then \eqref{s6eq1} admits a unique strong solution $\tilde{u}:=\{\tilde{u}(t)\}_{t\geq0}$ in the sense of Definition \ref{s2 df1} (without large jumps and replace $u$ by $\tilde{u}$).
\end{proposition}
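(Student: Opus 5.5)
We prove Proposition \ref{s6th1} by fitting \eqref{s6eq1} into the framework of \cite[Theorem~1.2]{MR3158475} (locally monotone SPDEs with L\'evy noise on a Gelfand triple), with the coercivity hypothesis replaced by a Lyapunov-type condition in the spirit of \cite[Theorem~2.3]{MR2888287}. The drift is $\tilde A$ and the jump coefficient is $F(z,\cdot)$ on $\{|z|\le1\}$. The scheme is the usual one. We take Galerkin approximations on $H^n$, which is invariant under $\Phi(1,z,\cdot)$ by Lemma \ref{c2th1}, so the finite-dimensional equations are closed. We derive a priori bounds, pass to weak limits, and identify the nonlinear limit by local monotonicity. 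It therefore suffices to verify, for $u,v,w\in V$, hemicontinuity, local monotonicity, a Lyapunov/coercivity-type bound and growth. The key identities are:
\begin{itemize}
\item $\Phi(1,z,\cdot)$ is a translation, hence an isometry on $H$ and on $V$;
\item $F(z,u)-F(z,v)=F(z,u-v)$ by linearity;
\item $\|F(z,h)\|_H\le \epsilon|z|\,\|h\|_V$, since $\Phi(1,z,h)-h=\int_0^1\epsilon z\,\partial_x\Phi(\theta,z,h)\,d\theta$.
\end{itemize}

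\textbf{Hemicontinuity} follows from the continuity of $\arctan$ and dominated convergence, since $|\arctan|\le \pi/2$. For the correction term, $G(z,\cdot)$ is linear, and $\int_{|z|\le1}G(z,h)\nu(dz)$ converges in $V^*$ by a second-order Taylor argument. Writing $G(z,h)=\int_0^1(1-\theta)\partial_{\theta\theta}\Phi(\theta,z,h)\,d\theta$ and using Lemma \ref{c2th1},
\[
{}_{V^*}\langle G(z,h),w\rangle_V=-(\epsilon z)^2\int_0^1(1-\theta)\langle\partial_x\Phi(\theta,z,h),\partial_x w\rangle_{L^2}\,d\theta ,
\]
so $\|G(z,h)\|_{V^*}\le \tfrac12\epsilon^2z^2\|h\|_V$. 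This is integrable against $\nu$ on $\{|z|\le1\}$.

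\textbf{Monotonicity.} Fix $h=u-v$. The $A$-part satisfies $\langle Au-Av,h\rangle\le0$ by Remark \ref{dissipativity}. For the noise, the crucial observation is the energy conservation of Marcus noise. Since translation preserves the $H$-norm,
\[
2\langle G(z,h),h\rangle+\|F(z,h)\|_H^2=\|\Phi(1,z,h)\|_H^2-\|h\|_H^2-2\epsilon z\langle\partial_x h,h\rangle=0 ,
\]
because $\langle\partial_x h,h\rangle=0$ on $\mathbb T$. The problematic $\|h\|_V^2$-sized terms thus cancel exactly, and
\[
2\langle\tilde Au-\tilde Av,u-v\rangle+\int_{|z|\le1}\|F(z,u)-F(z,v)\|_H^2\,\nu(dz)\le0 .
\]
This gives (even global) monotonicity in $H$. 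The same computation with $v=0$ gives the $H$-energy identity.

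\textbf{Lyapunov condition.} Since $A$ gives no coercivity in $V$, we follow \cite{MR2888287} and use the functional $\|u\|_V^2$ on Galerkin solutions. Formally, $\langle Au,-\partial_{xx}u\rangle=-\int \frac{(\partial_{xx}u)^2}{1+(\partial_xu)^2}\le0$. The noise again conserves the $V$-norm: applying the same identity to $\partial_x u$, the correction and jump contributions to $d\|u\|_V^2$ cancel up to a martingale. Hence $\mathbb E\sup_t\|u_n(t)\|_V^2\le C\|u_0\|_V^2$ uniformly in $n$, together with a bound on $\int_0^T\int\frac{(\partial_{xx}u_n)^2}{1+(\partial_xu_n)^2}\,dx\,dt$. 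Growth is immediate: $\|Au\|_{V^*}\le \pi/2$, $\|G\|_{V^*}\lesssim\|u\|_V$, and $\int\|F\|_H^2\,d\nu\lesssim\|u\|_V^2$.

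\textbf{Main obstacle.} The main difficulty is this Lyapunov step together with the limit passage. We must control the jump martingale in the $V$-norm; we would use the BDG inequality with $\|F(z,u)\|_V\le2\|u\|_V$ and $\|F(z,u)\|_H\le\epsilon|z|\|u\|_V$. We must also show that the weak limit lies in $L^2_{loc}(V)$ and that $A\bar u$ is identified, via monotonicity and a Minty argument as in \cite{MR3158475}, using the $H$-energy identity and the monotonicity inequality above. Uniqueness follows from the monotonicity inequality and It\^o's formula for $\|u-v\|_H^2$.
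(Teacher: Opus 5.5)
Your proposal is correct and follows essentially the same route as the paper. You verify hemicontinuity, monotonicity through the Marcus energy identity $2\langle G(z,h),h\rangle+\|F(z,h)\|_H^2=0$, the $V$-norm Lyapunov condition on $H^n$ via the same identity applied to $\partial_x u$, and the growth bound, and then invoke \cite[Theorem~1.2]{MR3158475} combined with \cite[Theorem~2.3]{MR2888287}. One small refinement: since $\|\Phi(1,z,u)\|_V=\|u\|_V$, the jump martingale in $d\|u_n\|_V^2$ is not merely a martingale but vanishes identically (this is the pathwise identity of Lemma~\ref{s6th3}), so no BDG estimate is needed --- which matters, because the bound $\|F(z,u)\|_V\le 2\|u\|_V$ alone would not be summable over the small jumps when $\nu$ is infinite.
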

\begin{proof}
The idea is to verify that hypothesis (H0)$\sim$(H4) (stated below) hold, then by combining similar arguments as in \cite[Theorem 2.3]{MR2888287} with \cite[Theorem 1.2]{MR3158475}, we can get Proposition \ref{s6th1} as claimed. Since (H0) is satisfied obviously, the verifications of (H1)$\sim$(H4) require substantial technical work owing to the structure of \eqref{s6eq1}, and the remaining steps are more about similar calculations as in the references mentioned above, we will focus only on the verifications of (H1)$\sim$(H4).\\
(H0) $\{e_k\}_{k\in\mathbb{Z}\backslash\{0\}}$ defined as in \eqref{ek} is an orthonormal basis of $H$, which is also an orthogonal basis of $V$.\\
(H1)(Hemicontinuity) For any $u,v,w\in V$,
$$\lambda\mapsto ~_{V^*}{\left<\tilde{A}(u+\lambda v),w\right>}{_V}, \text{is continuous on $\mathbb{R}$.}$$
(H2)(Local monotonicity) For any $u,v\in V$,
\begin{equation*}
2~_{V^*}{\left<\tilde{A}u-\tilde{A}v,u-v\right>}{_V}+\int_{\{|z|\leq1\}}||F(z,u)-F(z,v)||_H^2 \nu(dz)\leq 0.
\end{equation*}
(H3)(Lyapunov condition) For any $n\in\mathbb{N}$ and $u\in H^n$, we have that $\tilde{A}u\in V$ and
\begin{equation*}
2\left<\tilde{A}u,u\right>_V+\int_{\{|z|\leq1\}}||F(z,u)||_V^2\nu(dz)\leq 0.
\end{equation*}
(H4)(Boundedness) For any $u\in V$, there exists a positive constant $C$ such that
\begin{equation}\label{s6eq22} ||\tilde{A}u||_{V^*}^2+\int_{\{|z|\leq1\}} ||F(z,u)||_H^2\nu(dz)\leq C||u||_V^2.
\end{equation}

Below are the verifications of (H1)$\sim$(H4).\\
\emph{Verification of (H1):} To prove hemicontinuity as claimed above, it suffices to prove that for any $u,v,w\in V$,
$$\lambda\mapsto\int_{\{|z|\leq1\}} ~_{V^*}{\left<G(z,u+\lambda v),w\right>}{_V}\nu(dz)$$
is continuous at $\lambda=0$. From \eqref{c2eq2} we have that for any $u\in V$,
\begin{align}\label{s6eq3}
G(z,u)
&=\epsilon z \int_0^1 [\partial_x \Phi(\theta,z,u) - \partial_x u] d\theta= (\epsilon z)^2 \int_0^1 \int_0^{\theta} \partial_{xx} \Phi(\eta,z,u) d\eta d\theta \nonumber\\
&=\epsilon^2 z^2 \int_0^1 (1-\eta)\partial_{xx} \Phi(\eta,z,u)d\eta
\end{align}
holds in $V^*$.
Therefore, for any $\delta>0$, $|\lambda|\leq\delta$ and $z\in\mathbb{R}$, we have
\begin{align*}
|_{V^*}{\left<G(z,u+\lambda v),w\right>}{_V}|
&=\epsilon^2 z^2 |\int_0^1 (1-\eta)\left<-\partial_x\Phi(\eta,z,u+\lambda v), \partial_x w\right>_{L^2} d\eta|\\
&\leq C z^2 ||w||_V \sup_{\eta\in[0,1]}||\Phi(\eta,z,u+\lambda v)||_V\leq C z^2 ||w||_V \cdot ||u+\lambda v||_V\\
&\leq C z^2 ||w||_V (||u||_V+\delta ||v||_V).
\end{align*}
Meanwhile, by Lemma \ref{c2th1} we have that $\Phi(1,z,u+\lambda v)\to\Phi(1,z,u)$ in $V$ as $\lambda\to0$.
Hence $G(z,u+\lambda v)$ converges to $G(z,u)$ in $V^*$ as $\lambda\to0$.
Hemicontinuity then follows from the dominated convergence theorem.\\
\emph{Verification of (H2):} For any $u,v\in V$, we have
\begin{align}\label{s6eq6}
~_{V^*}{\left<\tilde{A}u-\tilde{A}v,u-v\right>}{_V}=&-\left<\arctan \partial_x u-\arctan \partial_x v, \partial_x u-\partial_x v\right>_H\nonumber\\
&+\int_{\{|z|\leq1\}}~_{V^*}{\left<G(z,u)-G(z,v),u-v\right>}{_V}\nu(dz).
\end{align}
Note that
\begin{align}\label{s6eq8}
&||F(z,u)-F(z,v)||_{L^2}^2\nonumber\\
=&||\Phi(1,z,u)-\Phi(1,z,v)||_{L^2}^2+||u-v||_H^2-2\left<\Phi(1,z,u)-\Phi(1,z,v),u-v\right>_{L^2},
\end{align}
and
\begin{align}\label{s6eq9}
&~_{V^*}{\left<G(z,u)-G(z,v),u-v\right>}{_V}\nonumber\\
=&\left<\Phi(1,z,u)-\Phi(1,z,v),u-v\right>_{L^2}-||u-v||_H^2-\epsilon z\left<\partial_x u-\partial_x v,u-v\right>_{L^2} \nonumber\\
=& \left<\Phi(1,z,u)-\Phi(1,z,v),u-v\right>_{L^2}-||u-v||_H^2,
\end{align}
where in the second equality of \eqref{s6eq9} we used the fact that $\left<\partial_x u-\partial_x v,u-v\right>_{L^2}=0$, which is justified by integration by parts and periodic boundary conditions.
Combining \eqref{s6eq8} with \eqref{s6eq9} yields,
\begin{align}\label{s6eq10}
&||F(z,u)-F(z,v)||_{L^2}^2+2~_{V^*}{\left<G(z,u)-G(z,v),u-v\right>}{_V}\nonumber\\
=&||\Phi(1,z,u)-\Phi(1,z,v)||^2_{L^2}-||u-v||_H^2
\nonumber\\
=&2\epsilon z \int_0^1\left<\partial_x \Phi(\theta,z,u)-\partial_x \Phi(\theta,z,v),\Phi(\theta,z,u)-\Phi(\theta,z,v)\right>_{L^2}d\theta=0,
\end{align}
where in the last line we again used integration by parts and periodic boundary conditions.
Hence, we have
\begin{equation}\label{s6eq13}
||F(z,u)-F(z,v)||_{L^2}^2+2~_{V^*}{\left<G(z,u)-G(z,v),u-v\right>}{_V}=0.
\end{equation}
Now we conclude from \eqref{s6eq6} and \eqref{s6eq13} that
\begin{align}\label{s6eq14}
&2~_{V^*}{\left<\tilde{A}u-\tilde{A}v,u-v\right>}{_V}+\int_{\{|z|\leq1\}}||F(z,u)-F(z,v)||_H^2 \nu(dz)\nonumber\\
=& -2\left<\arctan \partial_x u-\arctan \partial_x v, \partial_x u-\partial_x v\right>_{L^2}\leq 0.
\end{align}
\emph{Verification of (H3):} Note that for any $u\in H^n$,  $n\in\mathbb{N}$,
\begin{equation}\label{s6eq16}
||F(z,u)||^2_V =||\partial_x F(z,u)||^2_H =||\partial_x \Phi(1,z,u)||_H^2+||\partial_x u||_H^2 -2\left<\partial_x \Phi(1,z,u),\partial_x u\right>_H,
\end{equation}
\begin{equation*}\label{s6eq17}
\left<\tilde{A}u,u\right>_V=-\left<\frac{\partial_{xx} u}{1+(\partial_x u)^2}, \partial_{xx} u\right>_H+\int_{\{|z|\leq1\}}\left<G(z,u),u\right>_V\,\nu(dz),
\end{equation*}
\begin{equation}\label{s6eq18} \left<G(z,u),u\right>_V=\left<\partial_x \Phi(1,z,u),\partial_x u\right>_H
-||\partial_x u||_H^2-\epsilon z \left<\partial_{xx}u,\partial_x u\right>_H.
\end{equation}
Similarly to \eqref{s6eq10}, it follows from \eqref{s6eq16} and \eqref{s6eq18} that
\begin{align} \label{s6eq19} &2\left<G(z,u),u\right>_V+||F(z,u)||_V^2
=||\partial_x \Phi(1,z,u)||_H^2-||\partial_x u||_H^2-2\epsilon z \left<\partial_{xx}u,\partial_x u\right>_H=0,
\end{align}
where we used Lemma \ref{c2th1} and the fact that $u\in H^n$.
Hence,
\begin{equation}\label{s6eq21}  2\left<\tilde{A}u,u\right>_V+\int_{\{|z|\leq1\}}||F(z,u)||_V^2\nu(dz)=
-2\left<\frac{\partial_{xx} u}{1+(\partial_x u)^2}, \partial_{xx} u\right>_H\leq0.
\end{equation}
\emph{Verification of (H4):} By \eqref{c2eq2}, it is easy to see that for any $u\in V$,
\begin{align}\label{s6eq23}
||F(z,u)||_H^2
\leq\epsilon^2 z^2 \int_0^1 ||\Phi(\theta,z,u)||_V^2\,d\theta
\leq \epsilon^2 z^2 ||u||_V^2.
\end{align}
Meanwhile, for any $w\in V$ with $||w||_V=1$,
\begin{align}\label{s6eq24}
&\Big|~_{V^*}{\left<\tilde{A}u,w\right>}{_V}\Big|
\leq|\left<\arctan \partial_x u,\partial_x w\right>_{L^2}|
+\Big|\int_{\{|z|\leq1\}}~_{V^*}{\left<G(z,u),w\right>}{_V}\nu(dz)\Big|\nonumber\\
\leq& ||\arctan\partial_x u||_{L^2} \cdot ||\partial_x w||_{L^2}+\epsilon^2\int_{\{|z|\leq1\}}\int_0^1 z^2(1-\eta)\big|\left<-\partial_x\Phi(\eta,z,u),
\partial_x w\right>_{L^2}\big|\, d\eta\nu(dz)\nonumber\\
\leq& ||\partial_x u||_{L^2}+\epsilon^2\int_{\{|z|\leq1\}} z^2 \sup_{\eta\in[0,1]}||\Phi(\eta,z,u)||_V\,\nu(dz)
\leq\left(1+\epsilon^2 \int_{\{|z|\leq1\}}|z|^2\nu(dz)\right)||u||_V,
\end{align}
where we used \eqref{s6eq3} in the second step and Lemma \ref{c2th1} in the last step. Now, \eqref{s6eq22} follows from \eqref{s6eq23} and \eqref{s6eq24} by taking $C:=\left(1+\epsilon^2 \int_{\{|z|\leq1\}}|z|^2\nu(dz)\right)^2+\epsilon^2 \int_{\{|z|\leq1\}}|z|^2\nu(dz) $.

The proof of Proposition \ref{s6th1} is complete.
\end{proof}

\section{Regularity and long-time behavior}\label{longtime}
\setcounter{equation}{0}
 \setcounter{definition}{0}

As stated in the introduction, before  showing the regularity and long-time behavior result of \eqref{c2eq6} (cf. Proposition \ref{s6th6+1} and Proposition \ref{s6th2025}, respectively), we need some boundedness and convergence (cf. Lemma \ref{s6th3}-Lemma \ref{s6th5} below) of the strong solutions $\{u^{(n)}\}_{n\in\Bbb{N}}$ to the following auxiliary equations:
\begin{equation}\label{s6eq25}
\left\{\begin{aligned}
&du^{(n)}(t)= \Pi_n \tilde{A}u^{(n)}(t)dt
+\int_{\{|z|\leq1\}}F(z,u^{(n)}(t-))\tilde{N}(dzdt)\\
&~~~~~~~~~~~~~+\int_{\{|z|>1\}}F(z,u^{(n)}(t-))N(dzdt),\quad \forall t>0,\\
&u^{(n)}(0)=\Pi_n u_0\in H^n,
\end{aligned}\right.
\end{equation}
where $\Pi_n: V^*\to H^n$ is the orthogonal projection and $u_0\in V$. We remark that if $u^{(n)}(t-)\in H^n$, then $F(z,u^{(n)}(t-))\in H^n$, since $ \Phi(1,z,u^{(n)}(s-))=u^n(s-,\cdot+\epsilon z)$
still belongs to the finite dimensional space $H^n$.

\begin{lemma}\label{s6th3} Let $u_0\in V$. Then for the strong solutions $\{u^{(n)}\}_{n\in\Bbb{N}}$ to \eqref{s6eq25}, we have
that for any $t\geq0$,
\begin{equation}\label{s6eq26} ||u^{(n)}(t)||_V^2
+2\int_0^t \int_\mathbb{T} \frac{(\partial_{xx} u^{(n)}(s))^2}{1+(\partial_x u^{(n)}(s))^2}dxds
= ||\Pi_n u_0||_V^2,~~~\mathbb{P}\text{-}a.s.
\end{equation}
\end{lemma}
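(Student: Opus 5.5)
The plan is to apply the finite-dimensional It\^o formula for jump processes to $\|u^{(n)}(t)\|_V^2$. Since $u^{(n)}$ takes values in the finite-dimensional space $H^n$, on which $\|\cdot\|_V$ is a smooth quadratic form, the formula is directly available. A key observation is that every jump of $u^{(n)}$ is an exact translation: $u^{(n)}(s)=\Phi(1,z,u^{(n)}(s-))=u^{(n)}(s-,\cdot+\epsilon z)$. Translation preserves the $V$-norm, so the jump contributions to $\|u^{(n)}\|_V^2$ should vanish identically, pathwise, and not merely in expectation. This is what turns the statement into an almost sure identity.

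Concretely, I would write the It\^o formula in its pathwise form, treating the small and large jumps together:
\begin{align*}
\|u^{(n)}(t)\|_V^2=&\|\Pi_nu_0\|_V^2+2\int_0^t\left<\Pi_n\tilde A u^{(n)}(s),u^{(n)}(s)\right>_Vds\\
&+\int_0^t\int_{\{|z|\leq1\}}\Big(\|u^{(n)}(s-)+F(z,u^{(n)}(s-))\|_V^2-\|u^{(n)}(s-)\|_V^2\Big)\tilde N(dzds)\\
&+\int_0^t\int_{\{|z|>1\}}\Big(\|u^{(n)}(s-)+F\|_V^2-\|u^{(n)}(s-)\|_V^2\Big)N(dzds)\\
&+\int_0^t\int_{\{|z|\leq1\}}\Big(\|u^{(n)}+F\|_V^2-\|u^{(n)}\|_V^2-2\left<F(z,u^{(n)}),u^{(n)}\right>_V\Big)\nu(dz)ds.
\end{align*}
Since $u^{(n)}+F(z,u^{(n)})=\Phi(1,z,u^{(n)})$ and $\|\Phi(1,z,h)\|_V=\|h\|_V$ by Lemma \ref{c2th1}, both jump integrands vanish identically, so the stochastic integrals are zero a.s. The $\nu$-integrand reduces to $-2\langle F(z,u^{(n)}),u^{(n)}\rangle_V$.

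For the drift, I would use that $u^{(n)}\in H^n$ and that $\Pi_n$ is self-adjoint, which gives $\langle\Pi_n\tilde Au^{(n)},u^{(n)}\rangle_V=\langle\tilde Au^{(n)},u^{(n)}\rangle_V$. As in the verification of (H3), this equals
\[
-\int_\mathbb{T}\frac{(\partial_{xx}u^{(n)})^2}{1+(\partial_xu^{(n)})^2}dx+\int_{\{|z|\leq1\}}\left<G(z,u^{(n)}),u^{(n)}\right>_V\nu(dz).
\]
The correction term then combines with the $\nu$-term from the It\^o formula as $2\langle G-F,u^{(n)}\rangle_V=-2\epsilon z\langle\partial_x u^{(n)},u^{(n)}\rangle_V=-2\epsilon z\langle\partial_{xx}u^{(n)},\partial_xu^{(n)}\rangle_H=0$ by periodicity. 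This is precisely the cancellation behind \eqref{s6eq19}. It leaves exactly \eqref{s6eq26}.

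The main obstacle is justification rather than algebra. The $\nu$-integrals must converge separately, so I would bound $|\langle F(z,h),h\rangle_V|$ and $|\langle G(z,h),h\rangle_V|$ by $Cz^2\|h\|_{H^2}^2$. The $G$ bound follows from \eqref{s6eq3}. For $F$ this cannot be done on its own, because $F=O(|z|)$ and only the combination with the linear term gives order $z^2$. I would therefore use the fact that the linear part is orthogonal, $\langle\partial_x h,h\rangle_V=0$, to reduce $\langle F,h\rangle_V$ to $\langle G,h\rangle_V$. On $H^n$ the $H^2$ norm is controlled by $n\|h\|_V$, so all integrals are finite, given local boundedness of $\|u^{(n)}\|_V$ on $[0,t]$ from the c\`adl\`ag property. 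Finally, the It\^o formula with a compensated integral over $\{|z|\le1\}$ for $\nu$ of possibly infinite mass requires the integrand bound $\|F\|_V^2\le Cz^2$, which follows from translation invariance as well.
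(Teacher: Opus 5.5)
Your proposal is correct and follows essentially the same route as the paper. Both apply It\^o's formula to $\|u^{(n)}(t)\|_V^2$, kill the jump integrands because $u^{(n)}+F(z,u^{(n)})=\Phi(1,z,u^{(n)})$ is a translate with the same $V$-norm (cf.\ \eqref{s6eq28}), and cancel the compensator term against the correction term in $\tilde A$ exactly as in \eqref{s6eq19}--\eqref{s6eq21}; your $O(z^2)$ integrability checks on $H^n$ just make explicit what the paper leaves implicit.
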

\begin{proof} Applying It\^o's formula (cf. \cite[Pages 66-67]{MR1011252} ) to $||u^{(n)}(t)||_V^2$, we have that for any $t\geq0$,
\begin{align*}\label{s6eq27}
||u^{(n)}(t)||_V^2=&||\Pi_n u_0||_V^2+2\int_0^t \left<\tilde{A}u^{(n)}(s), u^{(n)}(s)\right>_V\,ds
+\int_0^t \int_{\{|z|\leq1\}} ||F(z,u^{(n)}(s))||_V^2 \,\nu(dz)ds\\
&+\int_0^t\int_{\{|z|\leq1\}} ||F(z,u^{(n)}(s-))+u^{(n)}(s-)||_V^2-||u^{(n)}(s-)||_V^2 \tilde{N}(dzds)\\
&+\int_0^t\int_{\{|z|>1\}} ||F(z,u^{(n)}(s-))+u^{(n)}(s-)||_V^2-||u^{(n)}(s-)||_V^2 N(dzds).
\end{align*}
We then get \eqref{s6eq26} by \eqref{s6eq21} and the fact that
\begin{eqnarray}\label{s6eq28}
||F(z,u^{(n)}(s-))+u^{(n)}(s-)||_V=||\Phi(1,z,u^{(n)}(s-))||_{V}=||u^{(n)}(s-)||_{V}.
\end{eqnarray}
\end{proof}

\begin{lemma}\label{s6th4}
Let $u_0\in V$. Then for the strong solution $\{u(t)\}_{t\geq0}$ to \eqref{c2eq6}, and the strong  solutions $\{u^{(n)}\}_{n\in\Bbb{N}}$ to \eqref{s6eq25}, we have that for any $t\geq0$,
\begin{equation}\label{s6eq30-0.1} \lim_{n\to\infty} ||u(t)-u^{(n)}(t)||_H=0,\;\mathbb{P}\text{-}a.s.,
\end{equation}
\begin{equation}\label{s6eq30-0.2}
\lim_{n\to\infty}\int_0^t\left<\arctan \partial_x u(s)-\arctan \partial_x u^{(n)}(s),\partial_x u(s)-\partial_x u^{(n)}(s)\right>_{L^2} ds=0,\ \mathbb{P}\text{-}a.s.
\end{equation}
\end{lemma}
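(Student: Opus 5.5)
The plan is to apply It\^o's formula in $H$ to $\|u(t)-u^{(n)}(t)\|_H^2$ and to use the cancellation identity \eqref{s6eq13} so that all noise and correction terms drop out, exactly as in the verification of (H2). The difference $w^{(n)}:=u-u^{(n)}$ satisfies
\[
dw^{(n)}=\big(\tilde A u-\Pi_n\tilde A u^{(n)}\big)dt+\int_{\{|z|\le1\}}\big(F(z,u)-F(z,u^{(n)})\big)\tilde N(dzdt)+\int_{\{|z|>1\}}\big(F(z,u)-F(z,u^{(n)})\big)N(dzdt).
\]
I split $\tilde A u-\Pi_n\tilde A u^{(n)}=(\tilde A u-\tilde A u^{(n)})+(I-\Pi_n)\tilde A u^{(n)}$. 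Since $u^{(n)}\in H^n$ and $\Pi_n$ is the orthogonal projection, the pairing of $(I-\Pi_n)\tilde A u^{(n)}$ with $w^{(n)}$ reduces to ${}_{V^*}\langle \tilde A u^{(n)},(I-\Pi_n)u\rangle_V$.

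For the large jumps, $\Phi(1,z,\cdot)$ is a translation, hence an isometry of $H$, so
\[
\|w^{(n)}(s-)+F(z,u(s-))-F(z,u^{(n)}(s-))\|_H=\|\Phi(1,z,w^{(n)}(s-))\|_H=\|w^{(n)}(s-)\|_H .
\]
The $N$-integral over $\{|z|>1\}$ in It\^o's formula therefore vanishes identically. For the same reason, the compensated small-jump integrand vanishes pathwise, in the same way as in Lemma~\ref{s6th3}. The compensator term $\int\|F(z,u)-F(z,u^{(n)})\|_H^2\nu(dz)$ cancels against $2\int\langle G(z,u)-G(z,u^{(n)}),w^{(n)}\rangle\nu(dz)$ by \eqref{s6eq13}. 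One has to check that the It\^o formula applies with the non-differentiated versions $\bar u$, following \cite{MR3158475}. This should give the pathwise identity
\[
\|w^{(n)}(t)\|_H^2+2\int_0^t\big\langle \arctan\partial_x u-\arctan\partial_x u^{(n)},\partial_x w^{(n)}\big\rangle_{L^2}ds=\|(I-\Pi_n)u_0\|_H^2+2\int_0^t {}_{V^*}\langle \tilde A u^{(n)}(s),(I-\Pi_n)u(s)\rangle_V\,ds .
\]

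Both terms on the left are nonnegative, so both claims \eqref{s6eq30-0.1} and \eqref{s6eq30-0.2} follow once the right-hand side is shown to tend to $0$ $\mathbb P$-a.s. The first term tends to $0$ since $u_0\in H$ (even in $V$). For the second term, (H4) gives $\|\tilde A u^{(n)}\|_{V^*}\le C\|u^{(n)}\|_V$. Lemma~\ref{s6th3} gives $\|u^{(n)}(s)\|_V\le\|u_0\|_V$ for all $s$, pathwise and uniformly in $n$. Hence the integrand is bounded by $C\|u_0\|_V\|(I-\Pi_n)\bar u(s)\|_V$. Since $\bar u\in L^2_{loc}([0,\infty);V)$ a.s., we have $\|(I-\Pi_n)\bar u(s)\|_V\to0$ for a.e.\ $s$, dominated by $\|\bar u(s)\|_V$. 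Dominated convergence on $[0,t]$ then gives convergence to $0$ a.s., and both claims follow.

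The main obstacle is to justify the It\^o formula for $\|u-u^{(n)}\|_H^2$ in the Gelfand triple with jumps, with the mixed drift $\Pi_n\tilde A u^{(n)}$, and to get a genuinely pathwise (not merely expectation) identity. I would invoke the It\^o formula of \cite{MR1011252}/\cite{MR3158475} for the variational setting. The isometry property of translations makes every stochastic integral vanish identically, so no Burkholder--Davis--Gundy argument or subsequence extraction is needed. A second concern is that the uniform $V$-bound on $u^{(n)}$ is needed in pathwise form; this is provided exactly by the identity \eqref{s6eq26} of Lemma~\ref{s6th3}.
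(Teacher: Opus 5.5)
Your proposal is correct and follows the paper's proof. You use the same steps: It\^o's formula for $\|u(t)-u^{(n)}(t)\|_H^2$, the translation isometry that makes both jump integrals vanish, \eqref{s6eq13} to cancel the compensator against the correction term, and dominated convergence on the projection remainder. The one difference is in that remainder: you bound ${}_{V^*}\langle\tilde A u^{(n)},(I-\Pi_n)\bar u\rangle_V$ directly by $C\|u_0\|_V\|(I-\Pi_n)\bar u\|_V$ using (H4) and Lemma~\ref{s6th3}, whereas the paper rewrites it as $\langle(1-\Pi_n)\tilde A u,u-u^{(n)}\rangle-\langle\tilde A u-\tilde A u^{(n)},(1-\Pi_n)u\rangle$ and applies H\"older with $\sup_n$ bounds on $\int_0^t\|u-u^{(n)}\|_V^2ds$ and $\int_0^t\|\tilde A u-\tilde A u^{(n)}\|_{V^*}^2ds$. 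Your version is slightly shorter and takes the integrability of $\bar u$ straight from Definition~\ref{s2 df1}(i).
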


\begin{proof}
Applying the $\rm It\hat{o}$ formula to $||u(t)-u^{(n)}(t)||^2_H$, we have that for any $t\geq0$,
\begin{align}\label{s6eq30}
&||u(t)-u^{(n)}(t)||^2_H\nonumber\\
=&||(1-\Pi_n)u_0||_H^2+2\int_0^t\ _{V^*}{\left<\tilde{A}u(s)-\Pi_n \tilde{A}u^{(n)}(s),u(s)-u^{(n)}(s)\right>}{_V}ds\nonumber\\
&+\int_0^t \int_{\{|z|\leq 1\}}||F(z,u(s-))-F(z,u^{(n)}(s-))+u(s-)-u^{(n)}(s-)||_H^2-||u(s-)-u^{(n)}(s-)||_H^2 \tilde{N}(dzds)\nonumber\\
&+\int_0^t \int_{\{|z|>1\}} ||F(z,u(s-))-F(z,u^{(n)}(s-))+u(s-)-u^{(n)}(s-)||_H^2-||u(s-)-u^{(n)}(s-)||_H^2 N(dzds)\nonumber\\
&+\int_0^t \int_{\{|z|\leq1\}}  ||F(z,u(s))-F(z,u^{(n)}(s))||_H^2 \nu(dz)ds.
\end{align}
Similarly to \eqref{s6eq28}, we have
\begin{align}\label{eq202502}
& ||F(z,u(s-))- F(z,u^{(n)}(s-))+u(s-)-u^{(n)}(s-)||_H^2\nonumber\\
=&||\Phi(1,z,u(s-))-\Phi(1,z,u^{(n)}(s-))||_H^2
=||u(s-)-u^{(n)}(s-)||_H^2.
\end{align}
Note that
\begin{align}\label{eq202502-Zhai}
&~_{V^*}{\left<(1-\Pi_n)\tilde{A}u^{(n)}(s),u(s)-u^{(n)}(s)\right>}{_V}\nonumber\\
=&~_{V^*}{\left<\tilde{A}u^{(n)}(s),(1-\Pi_n)(u(s)-u^{(n)}(s))\right>}{_V}\nonumber\\
=&~_{V^*}{\left<\tilde{A}u(s),(1-\Pi_n)(u(s)-u^{(n)}(s))\right>}{_V}
-~_{V^*}{\left<\tilde{A}u(s)-\tilde{A}u^{(n)}(s),(1-\Pi_n)(u(s)-u^{(n)}(s))\right>}{_V}
\nonumber\\
=&
~_{V^*}{\left<(1-\Pi_n)\tilde{A}u(s),u(s)-u^{(n)}(s)\right>}{_V}
-
~_{V^*}{\left<\tilde{A}u(s)-\tilde{A}u^{(n)}(s),(1-\Pi_n)u(s)\right>}{_V},
\end{align}
where in the last equality we used the fact that $(1-\Pi_n)u^{(n)}(s)=0$.
Hence, by \eqref{s6eq30}, \eqref{eq202502} and \eqref{eq202502-Zhai}, we have that for any $t\geq0$,
\begin{align}\label{s6eq31}
&||u(t)-u^{(n)}(t)||^2_H
\leq||(1-\Pi_n)u_0||_H^2+2\int_0^t ~_{V^*}{\left<\tilde{A}u(s)-\tilde{A}u^{(n)}(s),u(s)-u^{(n)}(s)\right>}{_V}ds\nonumber\\
&+2\int_0^t ~_{V^*}{\left<(1-\Pi_n)\tilde{A}u(s),u(s)-u^{(n)}(s)\right>}{_V}ds
-2\int_0^t ~_{V^*}{\left<\tilde{A}u(s)-\tilde{A}u^{(n)}(s),(1-\Pi_n)u(s)\right>}{_V}ds\nonumber\\
&+\int_0^t \int_{\{|z|\leq1\}}  ||F(z,u(s))-F(z,u^{(n)}(s))||_H^2 \nu(dz)ds.
\end{align}
By \eqref{s6eq31}, \eqref{s6eq14} and H$\rm\ddot{o}$lder's inequality, we get that for any $t\geq0$,
\begin{align}\label{s6eq32}  &||u(t)-u^{(n)}(t)||^2_H
+2\int_0^{t}\left<\arctan \partial_x u(s)-\arctan \partial_x u^{(n)}(s),\partial_x u(s)-\partial_x u^{(n)}(s)\right>_{L^2} ds\nonumber\\
\leq&||(1-\Pi_n) u_0||_H^2+2\left(\int_0^{t} ||(1-\Pi_n)\tilde{A}u(s)||_{V^*}^2\,ds\right)^{\frac{1}{2}}\left(\int_0^{t}||u(s)-u^{(n)}(s)||_V^2\,ds\right)^{\frac{1}{2}}\nonumber\\
&+2\left(\int_0^t ||(1-\Pi_n)u(s)||_V^2 ds\right)^{\frac{1}{2}}\left(\int_0^{t}||\tilde{A}u(s)-\tilde{A}u^{(n)}(s)||_{V^*}^2\,ds\right)^{\frac{1}{2}}.
\end{align}

By \eqref{s6eq26} and some classical arguments (see e.g. \cite[Chapter 4]{MR2329435}), we know that $\{u^{(n)}\}_{n\in\mathbb{N}}$ admits a subsequence $\{u^{(n_k)}\}_{k\in\mathbb{N}}$ such that as $k\rightarrow\infty$, $u^{(n_k)}\xrightarrow{w}u$ in $L^2([0,t]\times\Omega;H)$. Furthermore, by \eqref{s6eq26} and  \eqref{s6eq22} we have that for any $t\geq0$,
\begin{equation*}\label{20250603}
\mathop{{\rm ess}\,\sup}\limits_{s\in[0,t]}\mathbb{E}||u(s)||_V^2<\infty,\quad
\mathbb{E}\int_0^t||\tilde{A}u(s)||_{V^*}^2 ds<\infty,
\end{equation*}
and consequently we have that for any $t\geq0$,
\begin{equation}\label{20250603+1} \int_0^t ||u(s)||_V^2 ds<\infty,~\int_0^t ||\tilde{A}u(s)||_{V^*}^2 ds<\infty,\quad\mathbb{P}\text{-}a.s.
\end{equation}

By \eqref{20250603+1}, \eqref{s6eq26} and \eqref{s6eq22} again, we have that for any $t\geq0$,
$${\sup_{n}}\int_0^{t}||u(s)-u^{(n)}(s)||_V^2\,ds<\infty,\quad
\sup_{n}\int_0^{t}||\tilde{A}u(s)-\tilde{A}u^{(n)}(s)||_{V^*}^2\,ds<\infty, \,\mathbb{P}\text{-}a.s.$$
Now letting $n\to\infty$ on both sides of \eqref{s6eq32}, by the dominated convergence theorem, we then derive \eqref{s6eq30-0.1} and \eqref{s6eq30-0.2}.
\end{proof}

\begin{lemma}\label{s6th5}
Let $u_0\in V$. For the strong  solution $u:=\{u(t)\}_{t\geq0}$ to \eqref{c2eq6} and the strong solutions $\{u^{(n)}\}_{n\in\Bbb{N}}$ to \eqref{s6eq25}, we have that\\
{\rm (i)} For any $t\geq0$,
$$u^{(n)}(t)\xrightarrow{w}u(t)~\text{in}~ V,~~ \mathbb{P}\text{-}a.s.,\quad\text{and}\quad
||u(t)||_V\leq ||u_0||_V, ~~ \mathbb{P}\text{-}a.s.$$
{\rm (ii)}  Let $f(x)=\log(x+\sqrt{1+x^2})$, $\forall x\in\Bbb{R}$, then for any fixed $T>0$,
$$\partial_x \Big(f(\partial_x u^{(n)})\Big)\xrightarrow{w}\partial_x \Big(f(\partial_x u)\Big)~\text{in}~ L^2([0,T]\times \mathbb{T}),~~\mathbb{P}\text{-}a.s.$$
{\rm (iii)} For any fixed $T>0$,
\begin{equation}\label{20250601}
||u(T)||_V^2+\int_0^T\int_\mathbb{T}\frac{(\partial_{xx} u(s))^2}{1+(\partial_x u(s))^2}dxds\leq||u_0||_V^2,~~\mathbb{P}\text{-}a.s.
\end{equation}
\end{lemma}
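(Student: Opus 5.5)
For (i), fix $t\geq0$ and an $\omega$ in the full-measure set where \eqref{s6eq26} and \eqref{s6eq30-0.1} hold. By Lemma \ref{s6th3}, $\|u^{(n)}(t)\|_V\leq\|\Pi_nu_0\|_V\leq\|u_0\|_V$, so $\{u^{(n)}(t)\}_n$ is bounded in $V$. Every subsequence therefore has a further subsequence converging weakly in $V$ to some $w$. Since $V\hookrightarrow H$ compactly, or simply because weak convergence in $V$ implies weak convergence in $H$, we get $w=u(t)$ from \eqref{s6eq30-0.1}. The whole sequence thus converges weakly in $V$ to $u(t)$. Weak lower semicontinuity of the norm then gives $\|u(t)\|_V\leq\liminf\|u^{(n)}(t)\|_V\leq\|u_0\|_V$. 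The only care needed is to work on one null set for each fixed $t$.

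For (ii), note that $f'(x)=(1+x^2)^{-1/2}$, so $\partial_x f(\partial_xu^{(n)})=\partial_{xx}u^{(n)}/\sqrt{1+(\partial_xu^{(n)})^2}$. Its $L^2([0,T]\times\mathbb{T})$ norm squared is exactly the integral in \eqref{s6eq26}, hence bounded by $\frac12\|u_0\|_V^2$ pathwise. Also $|f(x)|\leq|x|$, so $f(\partial_xu^{(n)})$ is bounded in $L^2([0,T];H^1(\mathbb{T}))$. Along subsequences we get weak limits $f(\partial_xu^{(n)})\rightharpoonup \xi$ and $\partial_xf(\partial_xu^{(n)})\rightharpoonup\partial_x\xi$ in $L^2([0,T]\times\mathbb{T})$.

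The main obstacle is identifying $\xi=f(\partial_xu)$, because $f$ is nonlinear and we only know $\partial_xu^{(n)}\rightharpoonup\partial_xu$ weakly. Here I would use \eqref{s6eq30-0.2}: since $\arctan$ is strictly monotone and, on bounded sets, $(\arctan a-\arctan b)(a-b)\geq c(R)|a-b|^2$, the limit \eqref{s6eq30-0.2} yields convergence of $\partial_xu^{(n)}\to\partial_xu$ in measure on $[0,T]\times\mathbb{T}$. Then $f(\partial_xu^{(n)})\to f(\partial_xu)$ in measure. Combined with the uniform $L^2$ bound this gives $L^1$ convergence by uniform integrability; for this I would use the $L^\infty_t H^1$ bound, which gives $\partial_x u^{(n)}$ bounded in $L^2$, together with $H^1$ in space of $f(\partial_x u^{(n)})$, which gives $L^\infty$ bounds. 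Hence $\xi=f(\partial_xu)$. Alternatively one could use the Minty/maximal monotone trick, viewing $\arctan$ as monotone and $f$ as its composed monotone function. In either case uniqueness of the limit gives convergence of the whole sequence.

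For (iii), pass to the limit in \eqref{s6eq26} at $t=T$. By (i), $\|u(T)\|_V^2\leq\liminf\|u^{(n)}(T)\|_V^2$. By (ii) and weak lower semicontinuity of the $L^2$ norm,
\[\int_0^T\!\!\int_\mathbb{T}\frac{(\partial_{xx}u)^2}{1+(\partial_xu)^2}\,dx\,ds=\|\partial_xf(\partial_xu)\|^2_{L^2}\leq\liminf_n\|\partial_xf(\partial_xu^{(n)})\|_{L^2}^2.\]
Here the identity for $u$ is justified because $f(\partial_xu)\in L^2_tH^1$ and $f^{-1}=\sinh$ is smooth, so $\partial_xu\in L^2_tH^1$ as well and the chain rule applies. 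Adding these bounds and using $\|\Pi_nu_0\|_V\leq\|u_0\|_V$ gives \eqref{20250601}; the factor $2$ in \eqref{s6eq26} only strengthens the inequality.
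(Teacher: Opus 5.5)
Your proof is correct. Parts (i) and (iii) follow the paper, which calls (iii) a direct corollary of \eqref{s6eq26} and the weak convergences. Your check that $\partial_x f(\partial_x u)=\partial_{xx}u/\sqrt{1+(\partial_x u)^2}$ holds for the limit $u$ fills in a step the paper leaves implicit.

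For (ii) you take a genuinely different route. The paper's argument for (ii) does not use \eqref{s6eq30-0.2}. It proceeds as follows:
\begin{itemize}
\item it derives $u^{(n)}\to u$ strongly in $L^2([0,T];H)$ from \eqref{s6eq30-0.1};
\item it pairs this with the weak convergence of $\partial_x f(\partial_x u^{(n_k)})$ and integrates by parts, obtaining $\int_0^T\langle \partial_x u^{(n_k)}, f(\partial_x u^{(n_k)})\rangle_{L^2}ds\to\int_0^T\langle\partial_x u,U\rangle_{L^2}ds$;
\item it identifies $U=f(\partial_x u)$ by the Minty-type criterion for the maximal monotone map $f$ on $L^2([0,T]\times\mathbb{T})$.
\end{itemize}
This is essentially the alternative you mention in passing, with $f$ itself as the monotone map.

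Your main argument instead uses the dissipation term \eqref{s6eq30-0.2}, the strict monotonicity of $\arctan$, and Chebyshev control of $\{|\partial_x u^{(n)}|>R\}\cup\{|\partial_x u|>R\}$ via the $L^2$ bounds. These give $\partial_x u^{(n)}\to\partial_x u$ in measure, and you finish with the $1$-Lipschitz property of $f$ and Vitali. Your route avoids maximal monotone theory and yields more: strong convergence of $\partial_x u^{(n)}$ and $f(\partial_x u^{(n)})$ in $L^p([0,T]\times\mathbb{T})$ for every $p<2$. The paper's route needs only the strong $H$-convergence and the bound from \eqref{s6eq26}, not strict monotonicity of the drift nonlinearity.

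Two small corrections.
\begin{itemize}
\item The uniform $L^2([0,T]\times\mathbb{T})$ bound alone gives the uniform integrability you need on this finite measure space. Your appeal to ``$L^\infty$ bounds'' is unnecessary and not right as stated, because the $H^1$-in-$x$ control of $f(\partial_x u^{(n)})$ is only square integrable in time, so there is no bound uniform in $(t,x)$.
\item $\partial_x u\in L^2_tH^1$ does not follow from $f(\partial_x u)\in L^2_tH^1$, since $\sinh$ is not globally Lipschitz. What you actually get, and all you need, is this: for a.e.\ $t$, $f(\partial_x u(t))\in H^1(\mathbb{T})\subset C(\mathbb{T})$, hence $\partial_x u(t)=\sinh(f(\partial_x u(t)))\in H^1(\mathbb{T})$, and the chain rule identity holds for a.e.\ $t$.
\end{itemize}
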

\begin{proof}  To avoid confusion, we remark that all the convergences and estimations in the following proof hold $\mathbb{P}$-a.s.

\noindent {\rm (i)} For any $t\geq0$, from \eqref{s6eq26} we know that $\{u^{(n)}(t)\}_{n\in\mathbb{N}}$ is uniformly bounded in $V$, so for any sequence $\{n_k\}_{k\in\mathbb{N}}\subset\{n\}_{n\in\mathbb{N}}$, $\{u^{(n_k)}(t)\}_{k\in\mathbb{N}}$ has a weakly convergent subsequence (still denoted by $\{n_k\}_{k\in\mathbb{N}}$) in $V$. By \eqref{s6eq30-0.1} and the uniqueness of limit we know that as $k\rightarrow\infty$, $u^{(n_k)}(t)\xrightarrow{w} u(t)$ in $V$. By the arbitrariness of $\{n_k\}_{k\in\mathbb{N}}$ we get that as $n\rightarrow\infty$, $u^{(n)}(t)\xrightarrow{w} u(t)$ in $V$, and
$$||u(t)||_V^2\leq \liminf_{n\to\infty}||u^{(n)}(t)||_V^2\leq ||u_0||_V^2.$$

\noindent {\rm (ii)}  We will prove this result by using the properties of maximal monotone operators. To this end, we first carry out some preliminary work. Fix $T>0$. By Lemma \ref{s6th4},  similarly to (i), we know that \begin{eqnarray}\label{limit0}
  u^{(n)}\to u ~\text{in}~ L^2([0,T];H),\ \text{as}\ n\rightarrow\infty,
\end{eqnarray}
and
\begin{eqnarray}\label{limit1}
  \partial_x u^{(n)}\xrightarrow{w} \partial_x u ~\text{in}~ L^2([0,T];H),\ \text{as}\   n\rightarrow\infty.
\end{eqnarray}
Since  $\partial_x (f(\partial_x u^{(n)}(s)))=\frac{\partial_{xx} u^{(n)}(s)}{\sqrt{1+(\partial_x u^{(n)}(s))^2}}$, from \eqref{s6eq26} we see that $\{\partial_x (f(\partial_x u^{(n)}(\cdot)))\}_{n\in\mathbb{N}}$ is uniformly bounded in $L^2([0,T]\times\mathbb{T})$, then for any sequence $\{n_k\}_{k\in\mathbb{N}}\subset\{n\}_{n\in\mathbb{N}} $, there exists a subsequence (still denoted by $\{n_k\}_{k\in\mathbb{N}}$) such that as $k\rightarrow\infty$, $\{\partial_x (f(\partial_x u^{(n_k)}(\cdot)))\}_{k\in\mathbb{N}}$ weakly convergent in $L^2([0,T]\times\mathbb{T})$. Meanwhile, since $|f(x)|\leq|x|$, $\forall x\in\Bbb{R}$, there exists a subsequence of $\{n_k\}_{k\in\mathbb{N}}$ (still denoted by $\{n_k\}_{k\in\mathbb{N}}$) and $U\in L^2([0,T]\times\mathbb{T})$ such that
\begin{eqnarray}\label{limit2}
f(\partial_x u^{(n_k)}(\cdot))\xrightarrow{w} U ~\text{in}~ L^2([0,T]\times\mathbb{T}),\ \text{as}\   k\rightarrow\infty.
\end{eqnarray}
Therefore, by the uniqueness of limit we have that as $k\rightarrow\infty$, $\partial_x \Big(f(\partial_x u^{(n_k)}(\cdot))\Big)\xrightarrow{w}\partial_x U$ in $L^2([0,T]\times\mathbb{T})$, and by \eqref{limit0} we have
\begin{equation*}\label{20251001}
\lim_{k\to\infty}\int_0^T \left<u^{(n_k)}(s),\partial_x \Big(f(\partial_x u^{(n_k)}(s))\Big)\right>_{L^2}ds=\int_0^T \left<u(s),\partial_x U(s)\right>_{L^2}dt.
\end{equation*}
By the integration by parts we also have
\begin{equation}\label{20251002}
\lim_{k\to\infty}\int_0^T \left<\partial_x u^{(n_k)}(s),f(\partial_x u^{(n_k)}(s))\right>_{L^2}ds=\int_0^T \left<\partial_x u(s), U(s)\right>_{L^2}ds.
\end{equation}
Note that $f:L^2([0,T]\times\mathbb{T})\to L^2([0,T]\times\mathbb{T})$ is maximal monotone, by \eqref{limit1}, \eqref{limit2}, \eqref{20251002}, and \cite[Corollary 2.4]{MR2582280} we have $U=f(\partial_x u)$. Then, by the arbitrariness of $\{n_k\}_{k\in\mathbb{N}}$ we get $\partial_x \Big(f(\partial_x u^{(n)})\Big)\xrightarrow{w}\partial_x \Big(f(\partial_x u)\Big)$ in $L^2([0,T]\times\mathbb{T})$.\\
\noindent {\rm (iii)} \eqref{20250601} is a direct corollary of \eqref{s6eq26} and the weak convergences (i) (ii).
\end{proof}

The following corollary is a modification of Lemma \ref{s6th5} (iii) and will be used later. Since their proofs are similar, we omit its proof.

\begin{corollary}\label{s6th6} Let $u_0\in V$, $\{u(t)\}_{t\geq0}$ be the strong solution to \eqref{c2eq6}. Then for any  $0\leq s<t$, we have
\begin{equation}\label{20250601+++}||u(t)||_V^2+\int_s^t\int_\mathbb{T}\frac{(\partial_{xx} u(r))^2}{1+(\partial_x u(r))^2}dxdr\leq||u(s)||_V^2,~~\mathbb{P}\text{-}a.s.
\end{equation}
\end{corollary}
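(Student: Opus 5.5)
The plan is to rerun the Galerkin argument of Lemmas \ref{s6th3}--\ref{s6th5}, but started at time $s$ from the random initial datum $u(s)$. Then apply Lemma \ref{s6th5}(iii) to this restarted problem. Fix $0\le s<t$. By Lemma \ref{s6th5}(i), $u(s)\in V$ with $\|u(s)\|_V\le\|u_0\|_V$, $\mathbb{P}$-a.s., and it is $\mathcal{F}_s$-measurable.

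\textbf{Step 1: restart at time $s$.} On the shifted filtration $\{\mathcal{F}_{s+r}\}_{r\ge0}$, use the shifted Poisson random measure $N(dz\,d(s+r))$; it is again a time homogeneous Poisson random measure with intensity $\nu(dz)dr$, independent of $\mathcal{F}_s$. The process $v(r):=u(s+r)$ then solves \eqref{c2eq6} with initial datum $v(0)=u(s)$. This follows by subtracting the integral identity \eqref{c2eq61} at times $s$ and $s+r$. By uniqueness in Theorem \ref{c2th2} (Proposition \ref{s6th1} plus interlacing), $v$ is the strong solution from $u(s)$. The well-posedness, Itô formula and Galerkin arguments used earlier only require an $\mathcal{F}_0$-measurable initial datum in $V$ with, say, $\mathbb{E}\|v(0)\|_V^2<\infty$, and here $\mathbb{E}\|u(s)\|_V^2\le\|u_0\|_V^2$. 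So Lemmas \ref{s6th3}--\ref{s6th5} apply verbatim with $\Pi_n u_0$ replaced by $\Pi_n u(s)$.

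\textbf{Step 2: the Galerkin identity and passage to the limit.} Let $v^{(n)}$ solve \eqref{s6eq25} on the shifted time axis with $v^{(n)}(0)=\Pi_n u(s)$. The pathwise identity \eqref{s6eq26} gives
\[
\|v^{(n)}(t-s)\|_V^2+2\int_0^{t-s}\int_{\mathbb{T}}\frac{(\partial_{xx}v^{(n)})^2}{1+(\partial_x v^{(n)})^2}dx\,dr=\|\Pi_n u(s)\|_V^2\le\|u(s)\|_V^2 .
\]
This is pathwise and uses only the cancellations \eqref{s6eq21} and \eqref{s6eq28}, so it holds regardless of the randomness of the initial datum. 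Lemma \ref{s6th4} gives $L^2$-convergence $v^{(n)}\to v$. Lemma \ref{s6th5}(i) gives weak convergence in $V$ at time $t-s$, and Lemma \ref{s6th5}(ii) gives $\partial_x f(\partial_x v^{(n)})\rightharpoonup\partial_x f(\partial_x v)$ in $L^2([0,t-s]\times\mathbb{T})$. Note that the integrand above equals $|\partial_x f(\partial_x v^{(n)})|^2$. By weak lower semicontinuity of norms we obtain
\[
\|v(t-s)\|_V^2+2\int_0^{t-s}\int_{\mathbb{T}}\frac{(\partial_{xx}v)^2}{1+(\partial_x v)^2}\le\|u(s)\|_V^2 .
\]
This is even stronger than \eqref{20250601+++}. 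Undoing the shift $v(r)=u(s+r)$ gives the claim.

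\textbf{Main obstacle.} The delicate point is Step 1: justifying that the restarted problem falls within the earlier framework. This means checking that $u(s+\cdot)$ coincides with the solution from the $\mathcal{F}_s$-measurable datum $u(s)$, and that Lemma \ref{s6th4} still works. That lemma needs $\mathbb{E}\int\|\tilde A v\|_{V^*}^2<\infty$, which follows from \eqref{s6eq22} and $\mathbb{E}\|u(s)\|_V^2<\infty$. A second subtlety is the exceptional null sets: they depend on $s$ and $t$. This is harmless, since the statement is for fixed $0\le s<t$, $\mathbb{P}$-a.s.

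If one prefers to avoid restarting, the fallback is to work directly with $u^{(n)}$ on $[s,t]$. The identity \eqref{s6eq26} on $[s,t]$ gives the bound with right side $\|u^{(n)}(s)\|_V^2$. However, passing to the limit then requires $\|u^{(n)}(s)\|_V\to\|u(s)\|_V$, which is only known as a weak limit. For this reason I would use the restart route.
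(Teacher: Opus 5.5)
Your restart argument is correct, and it is the natural way to carry out what the paper leaves implicit when it omits the proof as ``similar'' to Lemma \ref{s6th5}(iii): rerun Lemmas \ref{s6th3}--\ref{s6th5} on $[s,t]$ with Galerkin approximations started from $\Pi_n u(s)$. Uniqueness is not really needed to identify $u(s+\cdot)$, because Lemmas \ref{s6th4}--\ref{s6th5} only use that it satisfies the integral equation from $u(s)$ together with the pathwise bound $\int_s^t\|u(r)\|_V^2dr<\infty$; and your remark that the naive route through $u^{(n)}(s)$ fails for lack of strong convergence in $V$ is accurate.
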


The following proposition reveals the regularity result of $u$ and the convergence of $\mathbb{E}||u(t)||_V^2$ as $t\rightarrow\infty$.

\begin{proposition}\label{s6th6+1}
Let $u_0\in V$, $u:=\{u(t)\}_{t\geq0}$ be the strong  solution to \eqref{c2eq6}. Then we have $u\in L^2([0,\infty); W^{2,1}(\mathbb{T}))\cap L^\infty([0,\infty);V)$, $\mathbb{P}$-a.s., and $||u(t)||_V\to0$ as $t\to\infty$, $\mathbb{P}$-a.s.
\end{proposition}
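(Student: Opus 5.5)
Since $\|u(t)\|_V\le\|u_0\|_V$ for all $t$ by Lemma \ref{s6th5}(i), the $L^\infty([0,\infty);V)$ part is immediate. For the rest we work pathwise with Lemma \ref{s6th5}(iii) and Corollary \ref{s6th6}. Letting $T\to\infty$ in \eqref{20250601} (the integrand is nonnegative, so monotone convergence applies) gives
\begin{equation*}
\int_0^\infty\int_{\mathbb{T}}\frac{(\partial_{xx}u(s))^2}{1+(\partial_x u(s))^2}\,dx\,ds\le\|u_0\|_V^2,\quad\mathbb{P}\text{-a.s.}
\end{equation*}
This weighted bound now has to be converted into a $W^{2,1}$ bound.

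By Cauchy--Schwarz,
\begin{equation*}
\|\partial_{xx}u\|_{L^1}^2\le\int_{\mathbb{T}}\frac{(\partial_{xx}u)^2}{1+(\partial_xu)^2}dx\cdot\int_{\mathbb{T}}\bigl(1+(\partial_xu)^2\bigr)dx\le\bigl(1+\|u_0\|_V^2\bigr)\int_{\mathbb{T}}\frac{(\partial_{xx}u)^2}{1+(\partial_xu)^2}dx,
\end{equation*}
where we used $\|u(s)\|_V\le\|u_0\|_V$. Hence $\int_0^\infty\|\partial_{xx}u(s)\|_{L^1}^2ds<\infty$. The lower-order parts of the $W^{2,1}$ norm are handled by Poincaré on mean-zero functions. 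The mean of $\partial_x u$ is also zero by periodicity, so $\|\partial_xu\|_{L^\infty}\le\|\partial_{xx}u\|_{L^1}$ and $\|u\|_{L^\infty}\le\|\partial_xu\|_{L^1}$. Therefore $\|u(s)\|_{W^{2,1}}\le C\|\partial_{xx}u(s)\|_{L^1}$, and $u\in L^2([0,\infty);W^{2,1}(\mathbb{T}))$ a.s. Making this rigorous requires $\partial_{xx}u\in L^1$ for a.e. $s$. That follows from Lemma \ref{s6th5}(ii): $\partial_x f(\partial_xu)\in L^2$, so $f(\partial_x u)$ is absolutely continuous in $x$, and since $f^{-1}=\sinh$ is smooth, $\partial_xu$ is absolutely continuous with $\partial_{xx}u=\sqrt{1+(\partial_xu)^2}\,\partial_x f(\partial_xu)$.

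For the decay we use the same embedding: $\|u(s)\|_V^2=\|\partial_xu\|_{L^2}^2\le\|\partial_xu\|_{L^\infty}^2\le\|\partial_{xx}u(s)\|_{L^1}^2$. This shows $\int_0^\infty\|u(s)\|_V^2ds<\infty$. Corollary \ref{s6th6} gives that $t\mapsto\|u(t)\|_V^2$ is a.s. nonincreasing; to make this hold simultaneously for all pairs $s<t$, we take rational times and use the a.s. statement on a countable set. A nonincreasing nonnegative function that is integrable on $[0,\infty)$ must tend to $0$, since $t\|u(t)\|_V^2\le\int_0^t\|u(s)\|_V^2ds$ bounds it. Thus $\|u(t)\|_V\to0$ a.s.

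The main obstacle is this monotonicity step. Corollary \ref{s6th6} only holds a.s. for each fixed pair $(s,t)$, and $u$ is only càdlàg in $H$, so we first pass to rational times. For general $t$ we then use weak lower semicontinuity of $\|\cdot\|_V$ together with right-continuity in $H$, exactly as in Lemma \ref{s6th5}(i), to obtain $\|u(t)\|_V\le\liminf_{q\downarrow t}\|u(q)\|_V$. This is enough to transfer both the bound and the limit to all $t$.
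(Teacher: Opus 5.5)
Your argument follows the paper's proof. The common steps are:
\begin{itemize}
\item the uniform bound $\|u(t)\|_V\le\|u_0\|_V$;
\item Cauchy--Schwarz with weight $1+(\partial_xu)^2$, giving a $T$-independent bound on $\int_0^T\|\partial_{xx}u\|_{L^1}^2$;
\item the embedding $\|u\|_V\le\|\partial_{xx}u\|_{L^1}$;
\item ``nonincreasing plus integrable implies decay'', via Corollary~\ref{s6th6}.
\end{itemize}
Your extra details are correct: the identity $\partial_{xx}u=\sqrt{1+(\partial_xu)^2}\,\partial_xf(\partial_xu)$ via $\sinh$, the mean-zero $L^\infty$ bounds, and rational times plus weak lower semicontinuity for the $V$-bound at every $t$.

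The step that does not close is the one you single out yourself. Monotonicity on rational times, together with $\|u(t)\|_V\le\liminf_{q\downarrow t}\|u(q)\|_V$, only gives upper bounds on $\|u(t)\|_V$ at irrational $t$. The inequality $t\|u(t)\|_V^2\le\int_0^t\|u(s)\|_V^2ds$ needs the opposite information, namely $\|u(s)\|_V\ge\|u(t)\|_V$ for a.e.\ $s<t$, and a Lebesgue integral does not see the rationals.

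As an abstract implication, your three facts do not give decay. The indicator of $\mathbb{Q}_+$ is constant on rationals, satisfies the $\liminf$ bound along rationals, has integral $0$, and does not tend to $0$. So your lower semicontinuity step transfers a limit from rationals to all $t$, but it cannot produce the limit along rationals.

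The repair is Fubini.
\begin{itemize}
\item Fix a rational $q$. Corollary~\ref{s6th6} gives $\mathbb{P}(\|u(q)\|_V>\|u(s)\|_V)=0$ for each $s<q$.
\item The map $h\mapsto\|h\|_V$, set to $+\infty$ off $V$, is lower semicontinuous on $H$, and $u$ is progressively measurable. Hence $\{(s,\omega):s<q,\ \|u(q)\|_V>\|u(s)\|_V\}$ is $ds\otimes\mathbb{P}$-measurable, with $\mathbb{P}$-null section at each fixed $s$.
\item By Tonelli, for a.e.\ $\omega$ its section at fixed $\omega$ is Lebesgue-null.
\item Intersecting over $q\in\mathbb{Q}_+$: almost surely, $q\|u(q)\|_V^2\le\int_0^\infty\|u(s)\|_V^2ds=:C(\omega)<\infty$ for all rational $q$.
\item Your lower semicontinuity step then gives $\|u(t)\|_V^2\le C(\omega)/t$ for every $t>0$.
\end{itemize}
The paper's proof glosses over the same point by reading Corollary~\ref{s6th6} as pathwise monotonicity, so this repair is needed there too.
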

\begin{proof}
From \eqref{20250601}, it is easy to see that $u\in L^\infty([0,\infty);V)$. By Lemma \ref{s6th5} we have that for any fixed $T>0$,
\begin{align}\label{s6eq+1} \int_0^T ||\partial_{xx} u(r)||^2_{L^1}dr
&=\int_0^T \left(\int_\mathbb{T} \frac{|\partial_{xx} u(r)|}{\sqrt{1+(\partial_x u(r))^2}}\cdot \sqrt{1+(\partial_x u(r))^2} dx\right)^2dr\nonumber\\
&\leq\int_0^T \left((1+||u(r)||_V^2)\cdot\int_\mathbb{T} \frac{|\partial_{xx} u(r)|^2}{1+(\partial_x u(r))^2}dx\right)dr\nonumber\\
&\leq(1+||u_0||_V^2)\int_0^T \int_\mathbb{T} \frac{|\partial_{xx} u(r)|^2}{1+(\partial_x u(r))^2}dxdr\nonumber\\
&\leq(1+||u_0||_V^2)||u_0||_V^2<\infty,~~ \mathbb{P}\text{-}a.s.
\end{align}
Note that the control on the right-hand side of \eqref{s6eq+1} does not depend on $T$,  we then conclude that $u\in L^2([0,\infty); W^{2,1}(\mathbb{T}))$, $\mathbb{P}$-a.s.

By the Sobolev embedding we have
$\int_0^\infty ||u(t)||^2_V dt<\infty,$
together with the fact that $||u(t)||^2_V$ is decreasing with respect to $t\geq0$, $\mathbb{P}$-a.s. (see Corollary \ref{s6th6}), we deduce that $||u(t)||^2\to0$ as $t\to\infty$, $\mathbb{P}$-a.s.
\end{proof}

Before proving the long-time behavior of $\{u(t)\}_{t\geq0}$ in $V$, we will first prove its long-time behavior in $H$.
\begin{lemma}\label{s6th20250801} Let $u_0\in V$,  $u:=\{u(t)\}_{t\geq0}$ be the strong  solution to \eqref{c2eq6}. Then there exists a constant $k_1>0$ (independent of $t$ and $\omega$) such that for any $t\geq0$,
\begin{equation}\label{202508002} ||u(t)||^2_H\leq ||u_0||_H^2 e^{-k_1 t},~~\mathbb{P}\text{-}a.s.
\end{equation}
\end{lemma}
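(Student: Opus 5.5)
The plan is to derive a pathwise energy identity for $\|u(t)\|_H^2$ and then close a Gronwall-type differential inequality, using the a priori bound $\|u(t)\|_V\le\|u_0\|_V$ from Lemma \ref{s6th5} (i) to make the degenerate dissipation coercive in $H$.

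\emph{Step 1: the energy identity.} I will apply the It\^o formula for the square of the $H$-norm in the Gelfand triple $V\subset H\subset V^*$ (the same formula used in \eqref{s6eq30}) to the strong solution $u$ of \eqref{c2eq6}. Both the $\tilde N$-integral and the $N$-integral over $\{|z|>1\}$ have integrand
\[
\|\Phi(1,z,u(s-))\|_H^2-\|u(s-)\|_H^2 .
\]
This integrand vanishes identically, because $\Phi(1,z,\cdot)$ is a spatial translation by Lemma \ref{c2th1} and therefore preserves the $L^2(\mathbb{T})$-norm; this is \eqref{eq202502} with $u^{(n)}=0$. The remaining drift collects $2\,{}_{V^*}\langle G(z,u),u\rangle_V$ from the correction term and $\|F(z,u)\|_H^2$ from the compensator. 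By \eqref{s6eq13} with $v=0$, these cancel for every $z$. Hence, $\mathbb{P}$-a.s. and for all $t\ge 0$,
\[
\|u(t)\|_H^2=\|u_0\|_H^2-2\int_0^t\langle \arctan\partial_x u(s),\partial_x u(s)\rangle_{L^2}\,ds .
\]
In particular, $t\mapsto\|u(t)\|_H^2$ is absolutely continuous and nonincreasing.

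\emph{Step 2: a coercivity bound.} I will show that for every $v\in V$ with $\|v\|_V\le M$,
\[
\langle \arctan\partial_x v,\partial_x v\rangle_{L^2}\ \ge\ c_M\|v\|_H^2 .
\]
This is the step I expect to be the main obstacle, because the nonlinearity $y\arctan y$ only grows linearly for large $|y|$, and $V$ does not control $\|\partial_x v\|_{L^\infty}$. I will use two pointwise facts. For $|y|\le1$, concavity gives $\arctan|y|\ge\frac{\pi}{4}|y|$, so $y\arctan y\ge\frac{\pi}{4}y^2$. For $|y|>1$, monotonicity gives $y\arctan y\ge\frac{\pi}{4}|y|$.

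Since $[v]=0$, $v$ vanishes somewhere, so $\|v\|_H\le\|v\|_{L^\infty}\le\|\partial_x v\|_{L^1}$. Splitting $\mathbb{T}$ into $\{|\partial_x v|\le1\}$ and $\{|\partial_x v|>1\}$ and using Cauchy--Schwarz gives
\[
\|v\|_H^2\le 2\int_{\{|\partial_xv|\le1\}}|\partial_xv|^2dx+2\|\partial_xv\|_{L^1}\int_{\{|\partial_xv|>1\}}|\partial_xv|\,dx .
\]
Here $\|\partial_x v\|_{L^1}\le\|v\|_V\le M$. Therefore $\|v\|_H^2\le \frac{8}{\pi}\max(1,M)\,\langle\arctan\partial_xv,\partial_xv\rangle_{L^2}$, that is, $c_M=\frac{\pi}{8\max(1,M)}$.

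\emph{Step 3: conclusion.} By Lemma \ref{s6th5} (i), or Corollary \ref{s6th6}, we have $\|u(s)\|_V\le\|u_0\|_V$ for all $s$, $\mathbb{P}$-a.s. (a countable dense set of times suffices, after which right-continuity extends the bound). I therefore take $M=\|u_0\|_V$, which is deterministic. Step 1 then yields, for a.e.\ $t$,
\[
\frac{d}{dt}\|u(t)\|_H^2\le -2c_M\|u(t)\|_H^2 .
\]
The pathwise Gronwall lemma gives \eqref{202508002} with $k_1:=2c_M=\frac{\pi}{4\max(1,\|u_0\|_V)}$. This constant is independent of $t$ and $\omega$; it depends only on the deterministic initial datum.

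A possible technical point is the justification of the It\^o formula with the $N$-integral over large jumps. I would handle it either by applying the formula between successive large jumps, which are finitely many on bounded intervals and at which the $H$-norm is unchanged, or by passing to the limit in the Galerkin identity via \eqref{s6eq30-0.1}, \eqref{s6eq30-0.2}.
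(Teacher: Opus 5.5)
Your proposal is correct, and its overall architecture matches the paper's: It\^o's formula for $\|u(t)\|_H^2$, translation invariance of $\Phi(1,z,\cdot)$ to kill both jump integrals, \eqref{s6eq13} with $v=0$ to cancel the correction term against the compensator, then the a priori bound $\|u(t)\|_V\le\|u_0\|_V$ and Gronwall. The only real difference is how you prove the coercivity bound $\langle\arctan\partial_x v,\partial_x v\rangle_{L^2}\ge c\|v\|_H^2$.

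\textbf{The paper's route.} It uses that $g(x)=x\arctan x$ is even and convex, so Jensen's inequality gives $\int_{\mathbb{T}}g(\partial_x u)\,dx\ge g(\|\partial_x u\|_{L^1})$. It then applies the crude bound $g(x)\ge x^2/(1+x^2)$, the estimate $\|\partial_x u\|_{L^1}\le\|u_0\|_V$, and an unspecified Sobolev constant $C_1$ with $\|v\|_H\le C_1\|\partial_x v\|_{L^1}$. This yields $k_1=(C_1^2(1+\|u_0\|_V^2))^{-1}$.

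\textbf{Your route.} You split $\mathbb{T}$ into $\{|\partial_x v|\le1\}$ and $\{|\partial_x v|>1\}$ and use the pointwise bound $y\arctan y\ge\frac{\pi}{4}\min(y^2,|y|)$. You get $C_1=1$ explicitly from the zero of a continuous mean-zero function.

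\textbf{What each buys.} Jensen is the shorter argument. Yours is more elementary and fully explicit, and it gives the better rate $k_1=\frac{\pi}{4\max(1,\|u_0\|_V)}$, which decays like $\|u_0\|_V^{-1}$ rather than $\|u_0\|_V^{-2}$ for large data. The Jensen route can recover this scaling if one replaces $x^2/(1+x^2)$ by the concavity bound $\arctan y\ge y\arctan M/M$ on $[0,M]$.

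You are also more careful than the paper in two places. First, you upgrade the fixed-$t$ almost-sure statement of Lemma \ref{s6th5}(i) to a single null set, using a countable dense set of times and weak lower semicontinuity of the $V$-norm. Second, you justify It\^o's formula in the presence of the uncompensated large-jump integral.
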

\begin{proof}
Applying It\^o's formula to $||u(t)||_H^2$, we have that for any $t\geq0$,
\begin{align*}
||u(t)||_H^2=&||u_0||_H^2+2\int_0^t ~_{V^*}{\left<\tilde{A}u(s), u(s)\right>}{_V}\,ds
+\int_0^t \int_{\{|z|\leq1\}} ||F(z,u(s))||_H^2 \,\nu(dz)ds\\
&+\int_0^t\int_{\{|z|\leq1\}} \big(||F(z,u(s-))+u(s-)||_H^2-||u(s-)||_H^2\big) \,\tilde{N}(dzds)\\
&+\int_0^t\int_{\{|z|>1\}} \big(||F(z,u(s-))+u(s-)||_H^2-||u(s-)||_H^2\big) \,N(dzds)\\
=&||u_0||_H^2-2\int_0^t\int_\mathbb{T}\arctan(\partial_x u(s))\cdot\partial_x u(s) dxds,~~\mathbb{P}\text{-}a.s.,
\end{align*}
where in the last equality we used the fact that $||F(z,u(s-))+u(s-)||_{H}=||\Phi(1,z,u(s-))||_{H}=||u(s-)||_{H}$ and \eqref{s6eq14} with $v\equiv0$. Since for any $s\geq0$,
\begin{align*}
\int_\mathbb{T}\arctan(\partial_x u(s))\cdot\partial_x u(s) dx\leq&\Big(\int_\mathbb{T}|\arctan(\partial_x u(s))|^2dx\Big)^{\frac{1}{2}}
\Big(\int_\mathbb{T}|\partial_x u(s)|^2dx\Big)^{\frac{1}{2}}\\
\leq&\frac{\pi}{2}
\Big(||u(s)||^2_V\Big)^{\frac{1}{2}}
 \leq\frac{\pi}{2}\left(||u_0||_V^2\right)^{\frac{1}{2}}<\infty,
\end{align*}
where \eqref{20250601+++} was used to obtain the last inequality,  we have
\begin{equation}\label{202508004}
\frac{d}{dt}||u(t)||_H^2=
-2\int_\mathbb{T}\arctan(\partial_x u(t))\cdot\partial_x u(t) dx,~~ \mathbb{P}\text{-}a.s.
\end{equation}

By the Sobolev embedding, there exists a positive constant $C_1$ such that $||v||_{H}\leq C_1||\partial_x v||_{L^1},\,\forall v\in V$. Since $g(x):=x\cdot\arctan x$ is convex, non-negative, and
$g(x)\geq\frac{x^2}{1+x^2}$, $\forall x\geq0$,  we have that for any $t\geq0$,
\begin{align}\label{202508005}
\int_\mathbb{T}\arctan(\partial_x u(t))\cdot\partial_x u(t) dx
\geq&
||\partial_x u(t)||_{L^1}\cdot\arctan||\partial_x u(t)||_{L^1}\nonumber\\
\geq&
\frac{||\partial_x u(t)||_{L^1}^2}{1+||\partial_x u(t)||_{L^1}^2}\geq
\frac{||\partial_x u(t)||_{L^1}^2}{1+||\partial_x u(t)||_{L^2}^2}\nonumber\\
\geq&
\frac{1}{C_1^2}\cdot \frac{||u(t)||_{H}^2}{1+||\partial_x u(t)||_{L^2}^2}
\geq \frac{||u(t)||_H^2}{C_1^2(1+||u_0||_{V}^2)},~~\mathbb{P}\text{-}a.s.,
\end{align}
where Lemma \ref{s6th5} (i) was used to obtain the last inequality.

Setting $k_1:=(C_1^2(1+||u_0||_{V}^2))^{-1}$, taking \eqref{202508004} and \eqref{202508005} into account, we conclude that for any $t\geq0$,
\begin{equation}\label{202508008} \frac{d}{dt}||u(t)||_H^2\leq -k_1||u(t)||_H^2,~~\mathbb{P}\text{-}a.s.
\end{equation}
\eqref{202508002} then follows from \eqref{202508008} and Gronwall's inequality.
\end{proof}

Now, we are ready to present the long-time behavior of $\{u(t)\}_{t\geq0}$ in $V$.

\begin{proposition} \label{s6th2025}
Let $u_0\in V$, $u:=\{u(t)\}_{t\geq0}$ be the strong  solution to \eqref{c2eq6}. Then $||u(t)||_V$ converges to 0 with exponential rate as $t\rightarrow\infty$, $\mathbb{P}$-a.s., in the sense that there exists a constant $k_0>0$ (independent of $t$ and $\omega$) such that
\begin{equation*}\label{s6eq39} \lim_{t\to\infty}e^{k_0 t}||u(t)||^2_V=0,~~\mathbb{P}\text{-}a.s.
\end{equation*}
\end{proposition}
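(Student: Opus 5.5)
We combine the exponential decay in $H$ (Lemma \ref{s6th20250801}), the monotone energy inequality of Corollary \ref{s6th6}, and the uniform bound $\|u(t)\|_V\le\|u_0\|_V$ from Lemma \ref{s6th5}(i). The idea is to interpolate between the $H$-norm, which decays exponentially, and a higher-order quantity controlled by the dissipation term. Concretely, for $s\ge0$ write
$$\|u(r)\|_V^2=-\int_\mathbb{T} u(r)\,\partial_{xx}u(r)\,dx\le \|u(r)\|_{L^\infty}\,\|\partial_{xx}u(r)\|_{L^1}\le C\|u(r)\|_H^{1/2}\|u(r)\|_V^{1/2}\,\|\partial_{xx}u(r)\|_{L^1}.$$
This uses the one-dimensional Gagliardo--Nirenberg (Agmon) inequality on $\mathbb{T}$ for mean-zero functions. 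Combining it with $\|u(r)\|_V\le\|u_0\|_V$ and \eqref{202508002}, we get
$$\|u(r)\|_V^2\le C(\|u_0\|_V)\,\|u_0\|_H^{1/2}e^{-k_1 r/4}\,\|\partial_{xx}u(r)\|_{L^1}.$$

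Next, I integrate this over a window $[t,t+1]$ and apply Cauchy--Schwarz in time:
$$\int_t^{t+1}\|u(r)\|_V^2dr\le C e^{-k_1t/4}\Big(\int_t^{t+1}\|\partial_{xx}u(r)\|^2_{L^1}dr\Big)^{1/2}.$$
As in \eqref{s6eq+1}, the last integral is bounded by
$$(1+\|u_0\|_V^2)\int_t^{t+1}\int_\mathbb{T}\frac{(\partial_{xx}u)^2}{1+(\partial_xu)^2}\,dx\,dr,$$
and by Corollary \ref{s6th6} this is at most $(1+\|u_0\|_V^2)\|u(t)\|_V^2\le(1+\|u_0\|_V^2)\|u_0\|_V^2$. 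Hence $\int_t^{t+1}\|u(r)\|_V^2dr\le Ce^{-k_1t/4}$, with $C$ depending only on $\|u_0\|_V$ and deterministic constants. Finally, Corollary \ref{s6th6} shows that $r\mapsto\|u(r)\|_V^2$ is nonincreasing $\mathbb{P}$-a.s. This gives $\|u(t+1)\|_V^2\le\int_t^{t+1}\|u(r)\|_V^2dr\le Ce^{-k_1t/4}$, and therefore $\|u(t)\|_V^2\le Ce^{k_1/4}e^{-k_1t/4}$. Choosing any $k_0<k_1/4$ yields $e^{k_0t}\|u(t)\|_V^2\to0$ $\mathbb{P}$-a.s.

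The delicate point is to make sure all the pathwise statements hold simultaneously for all times on one full-measure set. Corollary \ref{s6th6} and Lemma \ref{s6th20250801} are stated for fixed $s,t$, with the exceptional null set depending on them. I would first apply them for rational times and then use the càdlàg property in $H$ together with weak lower semicontinuity of $\|\cdot\|_V$, as in Lemma \ref{s6th5}(i). This extends the monotonicity and the bounds to all real times outside a single null set. A second, minor point is that the constant $k_1$ in Lemma \ref{s6th20250801} depends on $\|u_0\|_V$. This is consistent with the paper's notion of "independent of $t$ and $\omega$", since $u_0$ is deterministic, so $k_0$ inherits the same dependence. If the Agmon exponent turns out awkward, an alternative is to bound $\|u\|_V^2\le\|u\|_{L^\infty}\|\partial_{xx}u\|_{L^1}$ with $\|u\|_{L^\infty}\le\|\partial_xu\|_{L^1}$ and use the $L^1$-decay of $\partial_xu$ coming from \eqref{202508005}. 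The interpolation route above seems the most direct.
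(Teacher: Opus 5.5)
Your proof is correct. It shares the paper's skeleton: the bound $\|u\|_V^2\le\|u\|_{L^\infty}\|\partial_{xx}u\|_{L^1}$, the time square-integrability of $\|\partial_{xx}u\|_{L^1}$ as in \eqref{s6eq+1}, and Cauchy--Schwarz in time. It differs in two places: how the $L^\infty$ factor is made to decay, and how the integral bound becomes a pointwise one.

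On the first point, the paper uses $\|u\|_{L^\infty}\le C\|\partial_x u\|_{L^1}$. It controls the whole tail $\int_t^\infty\|\partial_x u\|_{L^1}^2\,ds$ by $(1+\|u_0\|_V^2)\|u(t)\|_H^2$, going back into the $H$-energy identity and the lower bound \eqref{202508005}. This yields $\int_t^\infty\|u(s)\|_V^2\,ds\le Ce^{-k_1t/2}$. You instead use the Agmon inequality together with only the pointwise conclusion of Lemma \ref{s6th20250801} and $\|u(r)\|_V\le\|u_0\|_V$. This is more modular. However, since the factor $\|u\|_V^{1/2}$ is merely bounded, you lose half the exponent: you get any $k_0<k_1/4$, whereas the paper's route, closed properly, allows any $k_0<k_1/2$. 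For the statement this is immaterial.

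On the second point, the paper invokes L'H\^{o}pital's rule. Read literally, it infers the limit of the derivative quotient $\|u(t)\|_V^2e^{k_0t}$ from that of the quotient of $\int_t^\infty\|u(s)\|_V^2\,ds$ by $k_0^{-1}e^{-k_0t}$. That is the converse of the rule, and it is legitimate here only because $t\mapsto\|u(t)\|_V^2$ is nonincreasing. Your unit-window inequality $\|u(t+1)\|_V^2\le\int_t^{t+1}\|u(r)\|_V^2\,dr$ makes this use of Corollary \ref{s6th6} explicit and is the clean way to finish.

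One refinement to your null-set remark. Right-continuity in $H$ plus weak lower semicontinuity in $V$ transfers \emph{upper bounds} on $\|u(\tau)\|_V$ from rational to real times, but it does not transfer the monotonicity itself. You only need the former. For each rational $t$, Corollary \ref{s6th6} with fixed endpoint $t+1$ and Fubini in $r$ give the window inequality almost surely. The resulting bound $\|u(q)\|_V^2\le Ce^{-k_1(q-1)/4}$ for rational $q$ then extends to every real $\tau$ by letting rationals $q\downarrow\tau$.
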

\begin{proof} From the proof of \eqref{202508005} and Lemma \ref{s6th5} (i), we see that for any $t\geq0$,
\begin{eqnarray*}\label{202508005 V2}
\int_\mathbb{T}\arctan(\partial_x u(t))\cdot\partial_x u(t) dx
\geq
\frac{||\partial_x u(t)||_{L^1}^2}{1+||\partial_x u(t)||_{L^2}^2}
\geq \frac{||\partial_x u(t)||_{L^1}^2}{1+||u_0||_{V}^2}
,~~ \mathbb{P}\text{-}a.s.
\end{eqnarray*}
By \eqref{202508004} we have that for any fixed  $T>t\geq0$,
\begin{eqnarray*}\label{s6eq27-20250809-01}
||u(T)||_H^2+2\int_t^T\int_\mathbb{T}\arctan(\partial_x u(s))\cdot\partial_x u(s) dxds
=||u(t)||_H^2,~~\mathbb{P}\text{-}a.s.,
\end{eqnarray*}
which implies that $T$ can be equal to $\infty$.
Hence by Lemma \ref{s6th20250801}, for any $t\geq0$,
\begin{align}\label{202508011}
\int_t^\infty ||\partial_x u(s)||_{L^1}^2 ds&\leq
(1+||u_0||_V^2)\int_t^\infty \left(\int_\mathbb{T}\arctan(\partial_x u(s))\cdot\partial_x u(s) dx\right)ds\nonumber\\
&=(1+||u_0||_V^2)||u(t)||_H^2
\leq (1+||u_0||_V^2)||u_0||_H^2 e^{-k_1 t},~~\mathbb{P}\text{-}a.s.
\end{align}

By the Sobolev embedding, there exists a positive constant $C$, which is independent of $t$ and $\omega$, such that $||u(t)||_V^2\leq||u(t)||_{L^\infty}||\partial_{xx} u(t)||_{L^1} \leq C||\partial_x u(t)||_{L^1}||\partial_{xx} u(t)||_{L^1}$. Therefore,
\begin{equation}\label{202508013} \int_t^\infty ||u(s)||_{V}^2 ds\leq C
\left( \int_t^\infty ||\partial_x u(s)||_{L^1}^2 ds\right)^{\frac{1}{2}}\left( \int_t^\infty||\partial_{xx} u(s)||_{L^1}^2 ds\right)^{\frac{1}{2}},~~\mathbb{P}\text{-}a.s.
\end{equation}
Note that by \eqref{s6eq+1}, for any $t\geq0$,
\begin{equation}\label{202508014}\int_t^\infty ||\partial_{xx} u(s)||_{L^1}^2 ds\leq (1+||u_0||_V^2)||u_0||_V^2<\infty,~~\mathbb{P}\text{-}a.s.
\end{equation}
Thus, taking \eqref{202508011}$\sim$\eqref{202508014} into account, we have that for any $t\geq0$,
\begin{equation*}\label{202508016}
\int_t^\infty ||u(s)||_{V}^2 ds\leq C (1+||u_0||_V^2)||u_0||_V||u_0||_H e^{\frac{-k_1 t}{2}}, ~~\mathbb{P}\text{-}a.s.
\end{equation*}
Setting $k_0:=k_1/4$, then by L'H$\rm\hat{o}$pital's Rule,
 \begin{equation*}\label{202508017} \lim_{t\to\infty}\frac{||u(t)||_{V}^2}{e^{-k_0 t}}
=\lim_{t\to\infty} \frac{\int_t^\infty ||u(s)||_{V}^2 ds}{k_0^{-1}e^{-k_0 t}}
\leq k_0{C}(1+||u_0||_V^2)||u_0||_V||u_0||_H\left(\lim_{t\to\infty}e^{-k_0 t}\right)=0,
\end{equation*}
which completes the proof of Proposition \ref{s6th2025}.
\end{proof}

Now we conclude from Proposition \ref{s6th1}, Corollary \ref{s6th6+1} and Proposition \ref{s6th2025} that the proof of Theorem \ref{c2th2} is complete.

\section*{Acknowledgements} This work is partially supported by National Key R\&D
Program of China(No. 2022YFA1006001).
Shijie Shang is supported by the National Natural Science Foundation of China (NSFC) (No.12571158), and the Fundamental Research Funds for the Central Universities (No. WK0010000081). Weina Wu is supported by NSFC (No.11901285). Jianliang Zhai is supported by NSFC (No.12371151, 12131019), and the Fundamental Research Funds for the Central Universities (No. WK3470000031, WK0010000081).


\begin{thebibliography}{99}

\bibitem{MR1225209}
Luis Alvarez, Fr\'{e}d\'{e}ric Guichard, Pierre-Louis Lions, Jean-Michel Morel.
\newblock Axioms and fundamental equations of image processing.
\newblock {\em Arch. Rational Mech. Anal.}, 123, no. 3, 199-257, 1993.

\bibitem{MR2582280}
Viorel Barbu.
\newblock {\em Nonlinear Differential Equations of Monotone Types in Banach Spaces}.
\newblock Springer Monographs in Mathematics. Springer, New York, 2010.

\bibitem{MR3155251}
Giovanni Bellettini.
\newblock {\em Lecture Notes on Mean Curvature Flow, Barriers and Singular Perturbations}. Volume 12 of  Appunti. Scuola Normale Superiore di Pisa (Nuova Serie) [Lecture Notes. Scuola Normale Superiore di Pisa (New Series)].
\newblock Edizioni della Normale, Pisa, 2013.

\bibitem{MR3158475}
Zdzis{\l}aw Brze\'{z}niak, Wei Liu, Jiahui Zhu.
\newblock Strong solutions for SPDE with locally monotone coefficients driven by L\'evy noise.
\newblock {\em Nonlinear Anal. Real World Appl.}, 17:283-310, 2014.

\bibitem{BLZ2021}
Zdzis{\l}aw Brze\'{z}niak, Wei Liu, Jiahui Zhu.
\newblock The stochastic Strichartz estimates and stochastic nonlinear Schr\"{o}dinger equations driven by L\'{e}vy noise.
\newblock {\em J. Funct. Anal.}, 281, no. 4, Paper No. 109021, 37 pp, 2021.


\bibitem{BM2019}
Zdzis{\l}aw Brze\'{z}niak,  Utpal Manna.
\newblock Weak solutions of a stochastic Landau-Lifshitz-Gilbert equation driven by pure jump noise.
\newblock {\em Comm. Math. Phys.}, 371, no. 3, 1071-1129,  2019.

\bibitem{CKS1997}
Vicent Caselles, Ron Kimmel, Guillermo Sapiro.
Geodesic active contours.
\newblock {\em International Journal of Computer Vision,} 22(1), 61-79, 1997.
\newblock https://doi.org/10.1023/A:1007979827043


\bibitem{MR4221662}
Nils Dabrock, Martina Hofmanov\'{a},  Matthias R\"{o}ger.
\newblock Existence of martingale solutions and large-time behavior for a stochastic mean curvature flow of graphs.
\newblock {\em Probab. Theory Related Fields.}, 179(1-2):407-449, 2021.

\bibitem{MR4089785}
Konstantinos Dareiotis, Benjamin Gess.
\newblock Nonlinear diffusion equations with nonlinear gradient noise.
\newblock {\em Electron. J. Probab.}, 25:Paper No. 35, 43, 2020.


\bibitem{MR2888287}
Abdelhadi Es-Sarhir, Max-K. von Renesse.
\newblock Ergodicity of stochastic curve shortening flow in the plane.
\newblock {\em SIAM J. Math. Anal.}, 44(1):224-244, 2012.

\bibitem{MR2996423}
Abdelhadi Es-Sarhir, Max-K. von Renesse, Wilhelm Stannat.
\newblock Estimates for the ergodic measure and polynomial stability of plane stochastic curve shortening flow.
\newblock {\em NoDEA Nonlinear Differential Equations Appl.}, 19(6):663-675, 2012.



\bibitem{FPR2025}
Franco Flandoli, Andrea Papini, Marco Rehmeier.
\newblock Average dissipation for stochastic transport equations with L\'{e}vy noise.
\newblock Conference paper. Stochastic Transport in Upper Ocean Dynamics III, pages 45-59. Springer Nature
Switzerland, Cham, 2025.



\bibitem{G2020} Ed Gallagher.
\newblock {\em Curve Shortening Flow}.
\newblock Phd thesis, Durham University, May 2020.


\bibitem{MR4674013}
Paul Gassiat, Benjamin Gess, Pierre-Louis Lions, Panagiotis E. Souganidis.
\newblock Long-time behavior of stochastic Hamilton-Jacobi equations.
\newblock {\em J. Funct. Anal.}, 286(4):Paper No. 110269, 49, 2024.


\bibitem{MR4692883}
Andris Gerasimovi\v{c}s, Martin Hairer, Konstantin Matetski.
\newblock Directed mean curvature flow in noisy environment.
\newblock {\em Comm. Pure Appl. Math.}, 77(3):1850-1939, 2024.


\bibitem{MR3605963}
Benjamin Gess, Michael R\"{o}ckner.
\newblock Stochastic variational inequalities and regularity for degenerate stochastic partial differential equations.
\newblock {\em Trans. Amer. Math. Soc.}, 369(5):3017-3045, 2017.

\bibitem{MR3205643}
Benjamin Gess, Jonas M. T\"{o}lle.
\newblock Multi-valued, singular stochastic evolution inclusions.
\newblock {\em J. Math. Pures Appl.}, (9)101(6):789-827, 2014.

\bibitem{MR3651056}
Martina Hofmanov\'{a}, Matthias R\"{o}ger, Max von Renesse.
\newblock Weak solutions for a stochastic mean curvature flow of two-dimensional graphs.
\newblock {\em Probab. Theory Related Fields.}, 168(1-2):373-408, 2017.


\bibitem{MR1011252}
Nobuyuki Ikeda, Shinzo Watanabe.
\newblock {\em Stochastic Differential Equations and Diffusion Processes}.
\newblock Second edition. North-Holland Mathematical Library, 24. North-Holland Publishing Co., Amsterdam; Kodansha, Ltd., Tokyo, 1989.





\bibitem{10.1143/PTP.67.147}
Kyozi Kawasaki, Takao Ohta.
\newblock Kinetic drumhead model of interface. I.
\newblock {\em Progress of Theoretical Physics}, 67(1):147-163, 1982.



\bibitem{KPP-95}
Thomas G. Kurtz, \'{E}tienne Pardoux, Philip Protter. \newblock Stratonovich stochastic differential equations driven by general semimartingales.
\newblock {\em Ann. Inst. H. Poincar\'{e} Probab. Statist.}, 31(2), 351-377, 1995.


\bibitem{MR1959710}
Pierre-Louis Lions, Panagiotis E. Souganidis.
\newblock Viscosity solutions of fully nonlinear stochastic partial differential equations.
\newblock Viscosity solutions of differential equations and related topics (Japanese) (Kyoto, 2001). No. 1287, 58-65, 2002.

\bibitem{MR3053478}
Wei Liu.
\newblock Well-posedness of stochastic partial differential equations with Lyapunov condition.
\newblock {\em J. Differential Equations.}, 255(3):572-592, 2013.



\bibitem{LT2025}
Dejun Luo, Feifan Teng.
\newblock Eddy viscosity by L\'{e}vy transport noises, \newblock arXiv:2510.13463, 2025.


\bibitem{MR1343443}
Ravi Malladi, James A Sethian.
\newblock Image processing via level set curvature flow.
\newblock {\em Proc. Nat. Acad. Sci.}, U.S.A. 92, no. 15, 7046-7050, 1995.


\bibitem{MA2021}
Utpal Manna, Akash Ashirbad Panda.
\newblock Well-posedness and large deviations for 2D stochastic constrained Navier-Stokes equations driven by L\'{e}vy noise in the Marcus canonical form.
\newblock {\em J. Differential Equations.}, 302, 64-138, 2021.


\bibitem{MR2815949}
Carlo Mantegazza.
\newblock {\em Lecture Notes on Mean Curvature Flow}. Volume 290 of {\em Progress in Mathematics}.
\newblock Birkh\"{a}user/Springer Basel AG, Basel, 2011.


\bibitem{Marcus-78}
Steven I. Marcus.
\newblock Modeling and analysis of stochastic differential equations driven by point processes.
\newblock {\em IEEE Trans. Inform. Theory.}, 24(2), 164-172, 1978.


\bibitem{MB2020}
Ji\v{r}\'{\i} Minar\v{c}\'{\i}k, Michal Bene\v{s}.
\newblock Long-term behavior of curve shortening flow in $\mathbb{R}^3$.
\newblock {\em SIAM J. Math. Anal.}, 52, no. 2, 1221-1231, 2020.



\bibitem{MR2356959} Szymon Peszat, Jerzy Zabczyk.
\newblock {\em Stochastic Partial Differential Equations with L\'evy noise: An Evolution Equation Approach}. \newblock Encyclopedia Math. Appl., 113
Cambridge University Press, Cambridge, 2007.


\bibitem{MR2329435}
Claudia Pr\'{e}v\^{o}t,  Michael R\"{o}ckner.
\newblock {\em A Concise Course on Stochastic Partial Differential Equations}.
\newblock Lecture Notes in Mathematics, 1905. Springer, Berlin, 2007.

\bibitem{MR4646815}
Florian Seib, Wilhelm Stannat, Jonas M. T\"olle.
\newblock Stability and moment estimates for the stochastic singular $\Phi$-Laplace equation.
\newblock {\em J. Differential Equations.}, 377:663-693, 2023.


\bibitem{ZLZ2026}
Jiahui Zhu, Wei Liu, Jianling Zhai.
\newblock Large deviation principles for stochastic nonlinear Schr\"{o}dinger equations driven by L\'{e}vy noise.
\newblock {\em J. Funct. Anal.}, 290, no. 9, Paper No. 111377, 59 pp, 2026.



\bibitem{MR1931534}
Xiping Zhu.
\newblock {\em Lectures on Mean Curvature Flows}. Volume~32 of {\em AMS/IP Studies in Advanced Mathematics}.
\newblock American Mathematical Society, Providence, RI; International Press, Somerville, MA, 2002.

\end{thebibliography}
\end{document}